\def\shorttitle{Hyperbolic Systems Modeling Phase Transformations I}
\def\shortauthor{Y. Liu \ and \ M. Yamamoto}
\newfont{\myfnt}{cmssi10 scaled 1440}
\numberwithin{equation}{section}
\def\ps@nk{\def\@oddhead{\vbox{\hbox to \hsize{\pic \footnotesize \it \shorttitle
\hfill \rm \thepage} \vspace{1mm} \vspace*{-2mm}}}
\def\@evenhead{\vbox{\hbox to \hsize{\pic \footnotesize \rm \thepage \hfill \it \shortauthor}
\vspace{1mm} \vspace*{-2mm}}}
\def\@oddfoot{} \def\@evenfoot{}}
\def\ps@first{\def\@oddhead{\vbox{\hbox to \hsize{\pic \footnotesize
} \break}}
\def\@oddfoot{} \def\@evenfoot{}}
\newtheoremstyle{thmstyle}
  {6pt}
  {6pt}
  {\it}
  {}
  {\bf}
  {}
  {.5em}
  {}
\newtheoremstyle{remstyle}
  {6pt}
  {6pt}
  {\rm}
  {}
  {\bf}
  {}
  {.5em}
  {}
\def\Section#1{\Sec{\large #1} \setcounter{equation}{0} \vskip -6mm \indent}
\def\Sec{\@Startsection{section}{1}{\z@}
                                   {-3.5ex \@plus -1ex \@minus -.2ex}%
                                   {2.3ex \@plus.2ex}%
                                   {\normalfont\large\bfseries\boldmath}}
\def\@Startsection#1#2#3#4#5#6{%
  \if@noskipsec \leavevmode \fi
  \par
  \@tempskipa #4\relax
  \@afterindenttrue
  \ifdim \@tempskipa <\z@
    \@tempskipa -\@tempskipa \@afterindentfalse
  \fi
  \if@nobreak
    \everypar{}%
  \else
    \addpenalty\@secpenalty\addvspace\@tempskipa
  \fi
  \@ifstar
    {\@ssect{#3}{#4}{#5}{#6}}%
    {\@dblarg{\@Sect{#1}{#2}{#3}{#4}{#5}{#6}}}}
\def\@Sect#1#2#3#4#5#6[#7]#8{%
  \ifnum #2>\c@secnumdepth
    \let\@svsec\@empty
  \else
    \refstepcounter{#1}%
    \protected@edef\@svsec{\@seccntformat{#1}\relax}%
  \fi
  \@tempskipa #5\relax
  \ifdim \@tempskipa>\z@
    \begingroup
      #6{%
          \@hangfrom{\hskip #3\relax\@svsec \hskip -2.5mm}%
          \interlinepenalty \@M #8\@@par}
    \endgroup
    \csname #1mark\endcsname{#7}%
    \addcontentsline{toc}{#1}{%
      \ifnum #2>\c@secnumdepth \else
        \protect\numberline{\csname the#1\endcsname}%
      \fi
      #7}%
  \else
    \def\@svsechd{%
      #6{\hskip #3\relax
      \@svsec #8}%
      \csname #1mark\endcsname{#7}%
      \addcontentsline{toc}{#1}{%
        \ifnum #2>\c@secnumdepth \else
          \protect\numberline{\csname the#1\endcsname}%
        \fi
        #7}}%
  \fi
  \@xsect{#5}}
\renewenvironment{abstract}{%
        \small
        \quotation
         \noindent {\bfseries \abstractname } }%
      {\if@twocolumn\else\endquotation\fi}
\def\Subsec{\@StartSubsection{subsection}{2}{\z@}%
                                     {-3.25ex\@plus -1ex \@minus -.2ex}%
                                     {1.5ex \@plus .2ex}%
                                     {\normalfont\normalsize\bfseries\boldmath}}
\def\@StartSubsection#1#2#3#4#5#6{%
  \if@noskipsec \leavevmode \fi
  \par
  \@tempskipa #4\relax
  \@afterindenttrue
  \ifdim \@tempskipa <\z@
    \@tempskipa -\@tempskipa \@afterindentfalse
  \fi
  \if@nobreak
    \everypar{}%
  \else
    \addpenalty\@secpenalty\addvspace\@tempskipa
  \fi
  \@ifstar
    {\@ssect{#3}{#4}{#5}{#6}}%
    {\@dblarg{\@SubSect{#1}{#2}{#3}{#4}{#5}{#6}}}}
\def\@SubSect#1#2#3#4#5#6[#7]#8{%
  \ifnum #2>\c@secnumdepth
    \let\@svsec\@empty
  \else
    \refstepcounter{#1}%
    \protected@edef\@svsec{\@seccntformat{#1}\relax}%
  \fi
  \@tempskipa #5\relax
  \ifdim \@tempskipa>\z@
    \begingroup
      #6{%
          \@hangfrom{\hskip #3\relax\@svsec\hskip -1.5mm}%
          \interlinepenalty \@M #8\@@par}
    \endgroup
    \csname #1mark\endcsname{#7}%
    \addcontentsline{toc}{#1}{%
      \ifnum #2>\c@secnumdepth \else
        \protect\numberline{\csname the#1\endcsname}%
      \fi
      #7}%
  \else
    \def\@svsechd{%
      #6{\hskip #3\relax
      \@svsec #8}%
      \csname #1mark\endcsname{#7}%
      \addcontentsline{toc}{#1}{%
        \ifnum #2>\c@secnumdepth \else
          \protect\numberline{\csname the#1\endcsname}%
        \fi
        #7}}%
  \fi
  \@xsect{#5}}
\def\list#1#2{\ifnum \@listdepth >5\relax \@toodeep \else \global
\advance \@listdepth\@ne \fi \rightmargin \z@ \listparindent\z@
\itemindent\z@ \csname @list\romannumeral\the\@listdepth\endcsname
\def\@itemlabel{#1}\let\makelabel\@mklab \@nmbrlistfalse #2\relax
\@trivlist \parskip 0pt \parindent\listparindent \advance \linewidth
-\rightmargin \advance\linewidth -\leftmargin \advance\@totalleftmargin
\leftmargin \parshape \@ne \@totalleftmargin \linewidth \ignorespaces}
\renewcommand{\@makecaption}[2]{\begin{center}#1. #2\end{center}}
\theoremstyle{thmstyle}
\newtheorem{thm}{\indent Theorem}[section]
\newtheorem{lem}[thm]{\indent Lemma}
\newtheorem{prop}[thm]{\indent Proposition}
\theoremstyle{remstyle}
\newtheorem{rem}{\indent \bf Remark}[section]
\newsavebox{\mygraphic}
\def\pic{\begin{picture}(0,0) \put(-210,-1250){\usebox{\mygraphic}} \end{picture}}
\newfont{\HUGEbf}{cmbx10 scaled 3500}
\definecolor{gray}{rgb}{0.9,0.9,0.9}
\def\thebibliography#1{\section*{\bf \large References}
\list{[\arabic{enumi}]} {\settowidth \labelwidth{[#1]} \leftmargin
\labelwidth \advance \leftmargin \labelsep \usecounter{enumi}}
\def\newblock{\hskip .11em plus .33em minus .07em} \footnotesize \sloppy \clubpenalty
4000 \widowpenalty 4000 \sfcode`\.=1000 \relax}
\def\e{\mathrm e}
\def\de{\delta}
\theoremstyle{definition}
\numberwithin{equation}{section}
\title{\Large \bf \boldmath\ \\
On the Multiple Hyperbolic Systems Modeling Phase Transformation Kinetics$^*$}
\author{\large Yikan LIU$^\dag$\qquad Masahiro YAMAMOTO$^\dag$}
\date{}
\begin{document}

\maketitle

\thispagestyle{first}
\renewcommand{\thefootnote}{\fnsymbol{footnote}}

\footnotetext{\hspace*{-5mm} \begin{tabular}{@{}r@{}p{13cm}@{}} &
Manuscript last updated: \today.\\
$^\dag$ & Graduate School of Mathematical Sciences, the University of Tokyo, Komaba 3-8-1, Meguro, Tokyo 153-8914, Japan. E-mail: ykliu@ms.u-tokyo.ac.jp, myama@ms.u-tokyo.ac.jp\\
$^*$ & Partly supported by the Japan-France joint research project (Japan Society for Promotion of Science and Centre National de la Recherche Scientifique).
\end{tabular}}

\renewcommand{\thefootnote}{\arabic{footnote}}

\begin{abstract}
We discuss Cahn's time cone method modeling phase transformation kinetics. The model equation by the time cone method is an integral equation in the space-time region. First we reduce it to a system of hyperbolic equations, and in the case of odd spatial dimensions, the reduced system is a multiple hyperbolic equation. Next we propose a numerical method for such a hyperbolic system. By means of alternating direction implicit methods, numerical simulations for practical forward problems are implemented with satisfactory accuracy and efficiency.  In particular, in the three dimensional case, our numerical method on basis of reduced multiple hyperbolic equation, is fast.

\vskip 4.5mm

\noindent\begin{tabular}{@{}l@{ }p{9cm}} {\bf Keywords } & phase transformation, Cahn's time cone method,
multiple hyperbolic equation, fast numerical method
\end{tabular}

\noindent{\bf AMS subject classifications}\ \ 74N05, 35L30, 65M06, 65M99
\end{abstract}

\baselineskip 14.5pt

\setlength{\parindent}{1.5em}

\setcounter{section}{0}

\Section{Introduction}\label{sec_intro}

Phase transformations such as the crystallization of liquids and materials are important kinetics arising in both spontaneous phenomena and artificial processes. In such transformations, nucleation and structure growth consist of the most determinant kinetics which greatly characterize the final mechanical properties. In retrospect, the earliest stochastic modeling of the phase transformation can trace back to Johnson-Mehl-Avrami-Kolmogorov theory (usually abbreviated as JMAK theory, see Kolmogorov~\cite{K-1937}, Johnson \& Mehl~\cite{JM-1939} and Avrami~\cite{A-1939,A-1940,A-1941}). These pioneering works were concerned with an infinite specimen without transformation initially, in which the random events of generation were expected to follow the Poisson distribution. Hence the fraction of phase transformations reads
\begin{equation}\label{eq_def-Poisson}
P=1-\e^{-u},
\end{equation}
where $u$ denotes the expectation of the generation events. More importantly, newborn nuclei were assumed to appear randomly in the remaining untransformed space with a constant expected nucleation rate, and each nucleus was supposed to grow radially at a constant speed until impingement.

Efforts on extending the original JMAK theory have then been devoted extensively in the last several decades (see, e.g., \cite{C-1956,J-1974,JC-1992}), and one of the most remarkable works should be attributed to Cahn~\cite{C-1996}, which inherits the Poisson distribution assumption on generation events but greatly polishes the model of nuclei growth. More precisely, instead of constants the nucleation rate is allowed to be time- and space-dependent while the growth speed can be time-dependent, written as $\alpha(\bm x,t)$ and $\rho(t)$ respectively. With these settings, in general spatial dimensions the expectation of generation events is modeled as
\begin{equation}\label{eq_def-u}
u(\bm x,t)=\int_{\Omega_\rho(\bm x,t)}\alpha(\bm y,s)\,\mathrm d\bm y\mathrm ds\quad(\bm x\in\mathbb R^d,\ t\ge0),
\end{equation}
where $\Omega_\rho(\bm x,t)$ denotes the so-called ``time cone'' defined as
\begin{equation}\label{eq_def-timecone}
\Omega_\rho(\bm x,t):=\{(\bm y,s)\mid0<s<t,\ |\bm y-\bm x|<r(t,s)\},\quad r(t,s):=\int_s^t\rho(\tau)\,\mathrm d\tau.
\end{equation}
Obviously, $r(t,s)$ stands for the radius of a transformed domain at time $t$ generated by a nucleus which was born at time $s$ without impingement. Therefore, a time cone $\Omega_\rho(\bm x,t)$ can be physically interpreted as the ensemble of all pairs $(\bm y,s)$ which would have caused transformation at $(\bm x,t)$. Especially, when $\alpha,\rho$ are positive constants and $d=3$, the phase transformation fraction can be easily calculated from \eqref{eq_def-u} and \eqref{eq_def-Poisson}, yielding the well-known JMAK equation
\[P(\bm x,t)=1-\exp\left(-\pi\alpha\rho^3\,t^4/3\right).\]
For an intuitive understanding of time cones, see Figure~\ref{fig_timecone}. As subsequent researches after Cahn's time cone method, we refer to \cite{BAC-2005,RV-2009}.
\begin{figure}[htbp]\centering
\includegraphics[width=\textwidth]{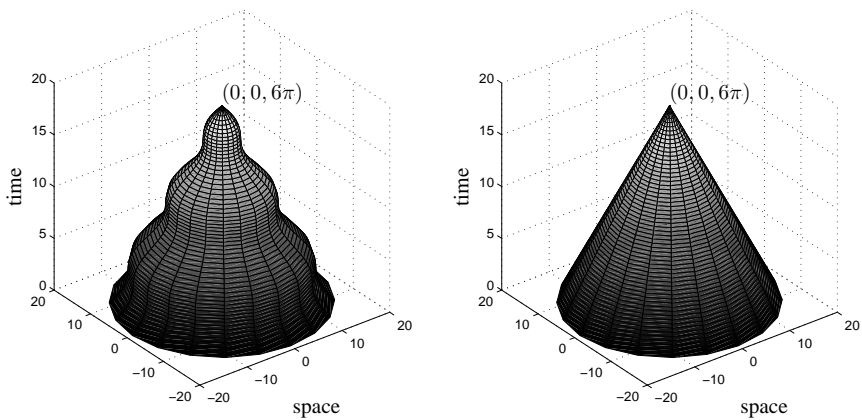}\\
\caption{\footnotesize{Examples of the two-dimensional time cones $\Omega_\rho(\bm x,t)$ with $\bm x=(0,0)$ and $t=6\pi$, generated by the growth speed $\rho(t)=1+0.9\cos t$ (left figure) and $\rho(t)\equiv1$ (right figure).}}\label{fig_timecone}
\end{figure}

Although models on phase transformation kinetics have been well established and have been widely utilized in industry, mathematical considerations on related forward and inverse problems are limited. To the best of our knowledge, only a similar but parallel concept named ``causal cone'' approach was proposed to study the morphology of crystalline polymeric systems, upon which several forward problems (see \cite{BC-2001,BCE-2002,BCS-2002,CS-2000,E-1996,EC-2005,MB-2001}) and inverse problems (see \cite{B-2001,BCE-1999,CEK-2008}) were investigated. For a comprehensive collection of mathematical topics on the polymer processing, we refer to Capasso~\cite{C-2003}. Recently, Liu, Xu and Yamamoto~\cite{LXY-2012} argued the one-dimensional identification of the growth speed on basis of Cahn's model.

As will be explained later, the time cone model \eqref{eq_def-u}--\eqref{eq_def-timecone} in its original expression is difficult to handle because it involves multiple integrations. Therefore, the purpose of this paper is to develop an alternative formulation describing Cahn's model which provides convenient methods for the discussion of both forward and inverse problems. Here by forward problems we mainly refer to finding $u$ by \eqref{eq_def-u} with given $\alpha$ and $\rho$ as well as suitable initial and boundary values, while inverse problems stand for the determination of $\alpha$ or $\rho$ by partial observations of $u$. The derived equivalent representations turn out to be a class of multiple hyperbolic systems, in the most concise forms only in odd spatial dimensions. Consequently, such treatment allows direct applications of abundant existing results concerning hyperbolic equations. We shall demonstrate the dramatically efficient forward solver in this paper and deal with several inverse problems in an upcoming one.

The rest of this paper is organized as follows. In Section~\ref{sec_main}, we briefly mention the motivation to find hyperbolic alternatives of Cahn's model \eqref{eq_def-u}--\eqref{eq_def-timecone} and state the main result, which proof is given in Section~\ref{sec_proof}. Section~\ref{sec_numer} shows numerical simulations of the forward problem in practical dimensions, and Section~\ref{sec_cclu} gives concluding remarks and prospections of future works. Finally, proofs of technical lemmata are postponed to Appendix~\ref{sec_tech}.

\Section{Motivation and Main Result}\label{sec_main}

From now on we concentrate on Cahn's time cone model \eqref{eq_def-u}--\eqref{eq_def-timecone}, which takes the form of an integral equation. More precisely, the nucleation rate $\alpha(\bm x,t)$ acts as the integrand function, and the growth speed $\rho(t)$ is embedded in the domain of integration. Therefore, although the solution $u$ is explicitly expressed in \eqref{eq_def-u}, in view of numerical treatments of forward problems it involves a $(d+1)$-dimensional numerical integration to approximate $u$ only for a single pair $(\bm x,t)$, not to mention the tremendous computational complexity in practice. On the other hand, note that the profile of a time cone $\Omega_\rho(\bm x,t)$ becomes irregular when the growth speed is no longer a constant (compare the left and right panels of Figure~\ref{fig_timecone}). Thus it is also  inconvenient to investigate corresponding inverse problems based on such an integral equation with a complicated domain of integration. These difficulties indicate the necessity to replace the original formulation by an equivalent time-evolutionary governing system, where $\alpha$ and $\rho$ are directly attainable.

In fact, such consideration is motivated by a first observation when $d=1$ and $\rho$ is a constant, in which case equation \eqref{eq_def-u} takes the exact form of d'Alembert's formula
\[u(x,t)=\int_0^t\!\!\int_{x-\rho(t-s)}^{x+\rho(t-s)}\alpha(y,s)\,\mathrm dy\mathrm ds.\]
In other words, providing certain regularity $\alpha\in C^{0,1}(\mathbb R\times\mathbb R_+)$, the function $u(x,t)$ should satisfy an inhomogeneous wave equation with homogeneous initial condition
\[\begin{cases}
(\partial_t^2-\rho^2\,\partial_x^2)u(x,t)=2\rho\,\alpha(x,t) & (x\in\mathbb R,\ t>0),\\
u(x,0)=\partial_tu(x,0)=0 & (x\in\mathbb R).
\end{cases}\]
Furthermore, obviously the growth speed and the nucleation rate play the roles of the propagation speed of wave and the source term (up to a multiplier) respectively. As a result, there is sufficient evidence to expect hyperbolic-type governing equations with respect to $u$ with time-dependent $\rho$ in higher spatial dimensions.

Now we state the main conclusion of the derived systems.

\begin{thm}[Multiple hyperbolic systems]\label{thm_gov-u}
Let the spatial dimensions $d=2m+1$ $(m=0,1,\ldots)$ and $u(\bm x,t)$ satisfy \eqref{eq_def-u}--\eqref{eq_def-timecone}. Assume that $u(\bm x,t),\ \alpha(\bm x,t)\ge0$ and $\rho(t)>0\ (\bm x\in\mathbb R^d,\ t\ge0)$ are sufficiently smooth functions, and introduce the hyperbolic operator
\[\mathcal P_\rho w(\bm x,t):=\frac1{\rho(t)}\partial_t\left(\frac{\partial_tw(\bm x,t)}{\rho(t)}\right)-\triangle w(\bm x,t).\]
Then $u(\bm x,t)$ satisfies the following multiple hyperbolic system
\begin{equation}\label{eq_gov-u-odd}
\begin{cases}
\mathcal P_\rho^{m+1}u(\bm x,t)=(2m)!!\,2^{m+1}\pi^m\,\alpha(\bm x,t)/\rho(t) & (\bm x\in\mathbb R^d,\ t>0),\\
\partial_t^ju(\bm x,0)=0,\quad j=0,1,\ldots,2m+1 & (\bm x\in\mathbb R^d).
\end{cases}
\end{equation}
\end{thm}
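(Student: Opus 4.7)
First I would eliminate the variable growth speed $\rho$ by the time change $\sigma = \phi(t) := \int_0^t \rho(\tau)\,d\tau$, a smooth diffeomorphism of $[0,\infty)$. Substituting $\tau = \phi(s)$ in the inner $s$-integral of \eqref{eq_def-u} and using $r(t,s) = \sigma - \tau$ together with $ds = d\tau/\rho$, the definitions $\tilde u(\bm x,\sigma) := u(\bm x, \phi^{-1}(\sigma))$ and $\tilde\alpha(\bm y,\tau) := \alpha(\bm y,\phi^{-1}(\tau))/\rho(\phi^{-1}(\tau))$ give the unit-speed representation
\[
\tilde u(\bm x, \sigma) = \int_0^\sigma \int_{|\bm y - \bm x| < \sigma - \tau} \tilde\alpha(\bm y,\tau)\,d\bm y\,d\tau.
\]
By the chain rule $(1/\rho)\partial_t = \partial_\sigma$, hence $\mathcal P_\rho$ transforms into the standard d'Alembertian $\square := \partial_\sigma^2 - \triangle$. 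The theorem thus reduces to establishing $\square^{m+1}\tilde u = (2m)!!\,2^{m+1}\pi^m\,\tilde\alpha$ in $d = 2m+1$, together with $\partial_\sigma^j\tilde u(\bm x,0) = 0$ for $j = 0,1,\ldots,2m+1$.

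Both sides of the reduced identity are linear in $\tilde\alpha$, so it is enough to test against plane waves $\tilde\alpha(\bm y,\tau) = f(\omega\cdot \bm y,\tau)$ for arbitrary $\omega \in S^{2m}$; the general case then follows by Radon or Fourier inversion in $\bm y$. For such a source, I would slice $|\bm y - \bm x| < \sigma - \tau$ orthogonally to $\omega$ via $\bm y - \bm x = \zeta\omega + \bm w$ with $\bm w \perp \omega$ and use the volume formula $V_{2m}(r) = \pi^m r^{2m}/m!$ for the $(d-1)$-dimensional ball to obtain
\[
\tilde u(\bm x,\sigma) = \frac{\pi^m}{m!}\int_0^\sigma\int_{-(\sigma-\tau)}^{\sigma-\tau}\bigl((\sigma-\tau)^2 - \zeta^2\bigr)^m f(s+\zeta,\tau)\,d\zeta\,d\tau =: U(s,\sigma),\quad s := \omega\cdot\bm x.
\]
All of $\bm x$ now enters only through $s$, hence $\square\tilde u = (\partial_\sigma^2 - \partial_s^2)U$, and the problem becomes effectively two-dimensional.

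Next I would pass to null coordinates $p = \sigma - s$, $q = \sigma + s$ and $p' = \tau - y$, $q' = \tau + y$ with $y := s+\zeta$; then $(\sigma-\tau)^2 - \zeta^2 = (p-p')(q-q')$, $dy\,d\tau = \tfrac{1}{2}dp'\,dq'$, and $\partial_\sigma^2 - \partial_s^2 = 4\partial_p\partial_q$, while the past cone becomes the region $\{p' < p,\ q' < q,\ p'+q' > 0\}$. Applying $\partial_p$ iteratively: the first $m$ derivatives lower the exponent of $(p-p')^m$ and accumulate the factorial $m!$ without any boundary contributions, because $(p-p')^k$ vanishes at $p' = p$ for $k \geq 1$; the $(m+1)$-st $\partial_p$ then evaluates the remaining integrand at $p' = p$. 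Performing the symmetric calculation for $\partial_q^{m+1}$ yields
\[
\partial_p^{m+1}\partial_q^{m+1}U(s,\sigma) = \frac{\pi^m m!}{2}\,f(s,\sigma).
\]
Multiplying by $4^{m+1}$ and invoking $(2m)!! = 2^m m!$ reproduces exactly the constant $(2m)!!\,2^{m+1}\pi^m$ in front of $\tilde\alpha$.

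The vanishing Cauchy data follow from the crude estimate that the time cone has $(d+1)$-dimensional Lebesgue measure $O(\sigma^{d+1})$, so $\tilde u(\bm x,\sigma) = O(\sigma^{2m+2})$ and hence $\partial_\sigma^j\tilde u(\bm x,0) = 0$ for $j \leq 2m+1$; changing back via $\sigma = \phi(t)$ with $\phi'(0) = \rho(0) > 0$ preserves this conclusion. The step I expect to require the most care is the iterated differentiation above — in particular, verifying that the intermediate $\partial_p^k$ and $\partial_q^k$ derivatives produce no stray boundary contributions, neither from the null faces $\{p'=p\}$ or $\{q'=q\}$ at intermediate stages, nor from the past-cone face $\{p'+q'=0\}$, which is unaffected by differentiation in $p$ or $q$. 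These cancellations are driven by the vanishing-to-order-$m$ of the weight $(p-p')^m(q-q')^m$ on each outer face combined with the smoothness of $\tilde\alpha$; after a standard smooth cut-off in $\bm y$ the plane-wave reduction is fully rigorous.
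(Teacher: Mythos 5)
Your argument is correct, and it reaches \eqref{eq_gov-u-odd} by a genuinely different route from the one in the paper. Both proofs begin with the same substitution $\sigma=R(t)$ turning $\mathcal P_\rho$ into $\square$ and the time cone into the unit-slope cone; after that they diverge. The paper proceeds by induction on $m$ through Proposition~\ref{prop_gov-Um}: it introduces the sphere/ball integral brackets \eqref{eq_def-brkt-bdry}--\eqref{eq_def-brkt-inter}, expresses each intermediate quantity $U_m$ as an explicit linear combination of them, and closes the induction with the combinatorial recursions for $P_m^k(d)$ and $c_m^k$ verified in Lemma~\ref{lem_tech}. You instead decompose the source into plane waves $f(\omega\cdot\bm y,\tau)$ (legitimate after the cut-off you mention, since $\square^{m+1}\tilde u$ at a point depends on $\tilde\alpha$ only in a compact backward cone), integrate out the $2m$ transverse variables to get the weight $\pi^m\bigl((\sigma-\tau)^2-\zeta^2\bigr)^m/m!$, and then factor that weight as $(p-p')^m(q-q')^m$ in null coordinates so that the $2(m+1)$ derivatives telescope: the first $m$ in each null direction produce no boundary terms because the weight vanishes to order $m$ on the moving faces, and the last one evaluates on the face. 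I checked the bookkeeping: $4^{m+1}\cdot\tfrac{\pi^m m!}{2}=2^{2m+1}\pi^m m!=(2m)!!\,2^{m+1}\pi^m$, and the $O(\sigma^{2m+2})$ volume bound does give the $2m+2$ homogeneous Cauchy data, which survive the pull-back through $R$ since $R(0)=0$ and $R$ is smooth. Your approach is shorter, avoids the coefficient combinatorics entirely, and makes the role of parity transparent (for even $d$ the transverse volume carries a half-integer power of $(p-p')(q-q')$, which no finite number of derivatives annihilates --- precisely the Huygens obstruction of Remark~\ref{rem_Duhamel}). What it does not deliver, and what the paper's heavier induction buys, is the intermediate single-wave systems for $U_1,\ldots,U_m$ of Proposition~\ref{prop_gov-Um}: those are exactly what Section~\ref{sec_numer} uses to solve the $d=3$ problem as two cascaded wave equations and to handle the even-dimensional case numerically. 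As a proof of Theorem~\ref{thm_gov-u} as stated, however, your argument is complete once the plane-wave superposition step is written out with the cut-off and the differentiation under the $\bm\xi$- (or Radon-) integral justified, which is routine for smooth compactly supported data.
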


\begin{rem}
In the theorem, for simplicity, we assume that $\alpha$, $\rho$ and $u$ are sufficiently smooth. On the other hand, if $\rho\in C^d[0,T]$ and $\alpha\in L^2(\mathbb R^d\times(0,T))$ for some $T>0$, then by a priori estimates (e.g., Lions and Magenes~\cite{LM-1972}) and the multiple hyperbolic system \eqref{eq_gov-u-odd}, we can establish the corresponding regularity of $u$ in Sobolev spaces, but here we do not discuss the details.
\end{rem}

It is readily seen that $\mathcal P_\rho$ is a hyperbolic operator with a damping term and the propagation speed of wave is indeed $\rho(t)$. Actually, in the next section we can find a change of variable in time by which $\mathcal P_\rho$ corresponds to the d'Alembertian with respect to the new time axis. For any odd $d$, the above theorem indicates that the integral in the $d$-dimensional time cone model \eqref{eq_def-u} can be completely eliminated by acting $(d+1)/2$ times of the operator $\mathcal P_\rho$ to both sides. For instance, we obtain a single hyperbolic system for $d=1$ and an interesting double hyperbolic system for $d=3$. Moreover, in these multiple hyperbolic systems, $\alpha/\rho$ appears explicitly as the source term (up to a multiplier), and the initial conditions are always homogeneous. Unfortunately, such concise expressions as \eqref{eq_gov-u-odd} are unavailable for any even $d$. In these cases, it can be inferred from Proposition~\ref{prop_gov-Um} that at best $\mathcal P_\rho^{d/2}u$ equals $d/2$ terms of integrals concerning $\alpha$ and $\rho$ which cannot be further canceled. As will be witnessed in Section~\ref{sec_numer}, this drawback remains certain inconvenience even in the numerical simulation of the two-dimensional forward problem. The apparent difference between odd and even dimensions can be explained by Huygens' principle (see Remark~\ref{rem_Duhamel}).

\Section{Proof of the Main Results}\label{sec_proof}

In order to deal with the physical model \eqref{eq_def-u}--\eqref{eq_def-timecone} in general spatial dimensions, we start from some overall settings. Throughout this section we adopt the smoothness and positivity assumptions on $\alpha$ and $\rho$ in Theorem~\ref{thm_gov-u}. Denote $\bm x=(x_1,\ldots,x_d)\in\mathbb R^d$ and let
\[B_d(\bm x,\ell):=\{y\in\mathbb R^d\mid|\bm y-\bm x|<\ell\},\quad S_d(\bm x,\ell):=\partial B_d(\bm x,\ell)\]
be the open ball and the corresponding sphere centered at $\bm x$ with radius $\ell>0$. Then equation \eqref{eq_def-u} becomes
\begin{equation}\label{eq_def-u'}
u(\bm x,t)=\int_0^t\!\!\int_{B_d(\bm x,r(t,s))}\alpha(\bm y,s)\,\mathrm d\bm y\mathrm ds\quad(\bm x\in\mathbb R^d,\ t\ge0).
\end{equation}
Recall that $\rho(t)$ is not a constant in general which generates the irregularity of the domain $\Omega_\rho(\bm x,t)$ of integration. However, this difficulty can be overcome by introducing the change of variable in time
\begin{equation}\label{eq_change-var}
\tau=R(t):=\int_0^t\rho(s)\,\mathrm ds=r(t,0)\quad(t\ge0),
\end{equation}
which is also adopted in Cannon \cite{C-1984} to treat parabolic equations. Thanks to the strict positivity of $\rho$, the function $R(t)$ is nonnegative and strictly increasing for $t\ge0$, allowing a well-defined inverse function $t=R^{-1}(\tau)$ for $\tau\ge0$. Moreover, it turns out from taking derivative in the identity $R(R^{-1}(\tau))=\tau$ that $(R^{-1}(\tau))'=1/\rho(R^{-1}(\tau))$. Therefore, performing the same change of variable in the integral on its right-hand side, we further simplify \eqref{eq_def-u'} as
\begin{equation}\label{eq_def-U0}
U_0(\bm x,\tau):= u(\bm x,R^{-1}(\tau))=\int_0^\tau\!\!\int_{B_d(\bm x,\tau-\zeta)}\frac{\alpha(\bm y,R^{-1}(\zeta))}{\rho(R^{-1}(\zeta))}\,\mathrm d\bm y\mathrm d\zeta\quad(\bm x\in\mathbb R^d,\ \tau\ge0).
\end{equation}
Consequently, it is convenient to consider $U_0(\bm x,\tau)$ instead of $u(\bm x,t)$ hereinafter since now the integration is taken in a regular cone $\Omega_1(\bm x,\tau)$ with vertex $(\bm x,\tau)$ and unit slope (see the right figure of Figure~\ref{fig_timecone}). In fact, for any smooth function $w$ in $\mathbb R^d\times[0,\infty)$, we discover by simple calculations that the same change of variable \eqref{eq_change-var} and the definition $W(\bm x,\tau):= w(\bm x,R^{-1}(\tau))$ give
\[\partial_\tau^2W(\bm x,\tau)=\left.\frac1{\rho(t)}\partial_t\left(\frac{\partial_tw(\bm x,t)}{\rho(t)}\right)\right|_{t=R^{-1}(\tau)},\quad\partial_\tau W(\bm x,0)=\frac{\partial_tw(\bm x,0)}{\rho(0)},\]
or equivalently, by taking $\tau=R(t)$ and recalling the operator $\mathcal P_\rho$ in Theorem~\ref{thm_gov-u},
\begin{equation}\label{eq_change-var-id}
\begin{cases}
\!\begin{alignedat}{2}
& \mathcal P_\rho w(\bm x,t)=\square W(\bm x,R(t)) & \quad & (\bm x\in\mathbb R^d,\ t>0),\\
& w(\bm x,0)=W(\bm x,0),\ \partial_tw(\bm x,0)=\rho(0)\,\partial_\tau W(\bm x,0) & \quad & (\bm x\in\mathbb R^d),
\end{alignedat}
\end{cases}
\end{equation}
where $\square:=\partial_\tau^2-\triangle$ denotes the d'Alembertian with $\tau$ as the time variable.

For later convenience, we denote by $\sigma_d$ the surface area of the $d$-dimensional unit ball, and write $F(\bm x,\tau):=\alpha(\bm x,R^{-1}(\tau))/\rho(R^{-1}(\tau))$ for simplicity. Then we introduce the following integral brackets for $k,j=0,1,\ldots$, $\bm x\in\mathbb R^d$ and $\tau>0$ that
\begin{align}
[k,S_d,\triangle^j](\bm x,\tau) & :=\int_0^\tau\!\!\int_{S_d(\bm x,\tau-\zeta)}\frac{\triangle^jF(\bm y,\zeta)}{(\tau-\zeta)^k}\,\mathrm d\bm\sigma\mathrm d\zeta\quad(k\le d-1),\label{eq_def-brkt-bdry}\\
[k,B_d,\triangle^j](\bm x,\tau) & :=\int_0^\tau\!\!\int_{B_d(\bm x,\tau-\zeta)}\frac{\triangle^jF(\bm y,\zeta)}{(\tau-\zeta)^k}\,\mathrm d\bm y\mathrm d\zeta\quad(k\le d).\label{eq_def-brkt-inter}
\end{align}
The restriction on $k$ guarantees the well-posedness of the above definitions, that is, there is no singularity near $\tau=0$. Furthermore, by the smoothness assumption and an averaging argument, we find
\[\lim_{\tau\downarrow0}[k,S_d,\triangle^j](\bm x,\tau)=0\ (k\le d-1),\quad\lim_{\tau\downarrow0}[k,B_d,\triangle^j](\bm x,\tau)=0\ (k\le d),\]
which allows the redefinition
\begin{equation}\label{eq_brkt-initial}
[k,S_d,\triangle^j](\bm x,0)=0\ (k\le d-1),\quad[k,B_d,\triangle^j](\bm x,0)=0\ (k\le d).
\end{equation}

Now we relate the two brackets by differential operations.

\begin{lem}\label{lem_brkt-diff}
Let the spatial dimensions $d\ge2,\ k=0,1,\ldots,d-1$ and $j=0,1,\ldots$. Let $[k,S_d,\triangle^j](\bm x,\tau)$ and $[k,B_d,\triangle^j](\bm x,\tau)\ (\bm x\in\mathbb R^d,\ \tau>0)$ be defined as in $\eqref{eq_def-brkt-bdry}$ and $\eqref{eq_def-brkt-inter}$ respectively. Then
\begin{align}
& \triangle[k,S_d,\triangle^j]=[k,S_d,\triangle^{j+1}],\quad\triangle[k,B_d,\triangle^j]=[k,B_d,\triangle^{j+1}],\label{eq_brkt-laplace}\\
& \partial_\tau[k,S_d,\triangle^j]=\begin{cases}
(d-k-1)\,[k+1,S_d,\triangle^j]+[k,B_d,\triangle^{j+1}] & (k<d-1),\\
\sigma_d\,\triangle^jF+[d-1,B_d,\triangle^{j+1}] & (k=d-1),
\end{cases}\label{eq_brkt-timediff-bdry}\\
& \partial_\tau[k,B_d,\triangle^j]=-k\,[k+1,B_d,\triangle^j]+[k,S_d,\triangle^j].\label{eq_brkt-timediff-inter}
\end{align}
\end{lem}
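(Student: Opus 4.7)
The key idea is to rewrite both brackets so that the domain of integration no longer depends on $\bm x$ and depends on $\tau$ only through the factor $\tau-\zeta$. For the surface bracket the change of variable $\bm y=\bm x+(\tau-\zeta)\bm\omega$ with $\bm\omega\in S_d(0,1)$ recasts \eqref{eq_def-brkt-bdry} as
\[[k,S_d,\triangle^j](\bm x,\tau)=\int_0^\tau(\tau-\zeta)^{d-1-k}\int_{S_d(0,1)}\triangle^jF(\bm x+(\tau-\zeta)\bm\omega,\zeta)\,\mathrm d\bm\sigma_\omega\mathrm d\zeta,\]
and the analogous spherical-coordinate rewriting ($\bm y=\bm x+r\bm\omega$ with $r\in(0,\tau-\zeta)$) handles the ball bracket. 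The restrictions on $k$ keep the net powers $(\tau-\zeta)^{d-1-k}$ and $(\tau-\zeta)^{d-k}$ nonnegative, so differentiation under the integral is legitimate.

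With this rewriting, the Laplacian identities \eqref{eq_brkt-laplace} are almost free: $\bm x$ now enters only through the smooth argument of $F$, and since $\triangle_{\bm x}$ commutes with the translation $\bm x\mapsto\bm x+(\tau-\zeta)\bm\omega$, moving $\triangle_{\bm x}$ inside each integral simply replaces $\triangle^jF$ by $\triangle^{j+1}F$.

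For \eqref{eq_brkt-timediff-bdry} I apply Leibniz's rule to the rewritten surface bracket and identify three pieces. The boundary contribution at $\zeta=\tau$ carries a factor $(\tau-\zeta)^{d-1-k}$ evaluated at $\zeta=\tau$: this vanishes when $k<d-1$ and equals $1$ when $k=d-1$, the latter case producing $\sigma_d\,\triangle^jF(\bm x,\tau)$. Differentiating the explicit power $(\tau-\zeta)^{d-1-k}$ in $\tau$ yields $(d-k-1)[k+1,S_d,\triangle^j]$. Finally, the chain-rule derivative
\[\partial_\tau\triangle^jF(\bm x+(\tau-\zeta)\bm\omega,\zeta)=\bm\omega\cdot\nabla\triangle^jF(\bm x+(\tau-\zeta)\bm\omega,\zeta),\]
which after reverting to $\bm y$-coordinates becomes the integral of the radial derivative of $\triangle^jF$ over $S_d(\bm x,\tau-\zeta)$; one application of the divergence theorem turns it into $\int_{B_d(\bm x,\tau-\zeta)}\triangle^{j+1}F\,\mathrm d\bm y$, giving $[k,B_d,\triangle^{j+1}]$.

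For \eqref{eq_brkt-timediff-inter} I use the spherical-coordinate form of $[k,B_d,\triangle^j]$ and again apply Leibniz. The derivative of the inner radial upper limit $r=\tau-\zeta$ produces $[k,S_d,\triangle^j]$ (the factor $r^{d-1}\mathrm d\bm\sigma_\omega$ reassembles the surface measure); the outer upper limit at $\zeta=\tau$ contributes zero because the radial interval collapses there; and differentiating $(\tau-\zeta)^{-k}$ in $\tau$ yields $-k\,[k+1,B_d,\triangle^j]$. The main technical care is in justifying the differentiation and in showing that the endpoint contributions at $\zeta=\tau$ behave as claimed: the nonnegative radial powers produced by the change of variable are precisely what dominates the singular weight $(\tau-\zeta)^{-k}$, which is exactly the role of the restrictions $k\le d-1$ and $k\le d$.
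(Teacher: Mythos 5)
Your proposal is correct and follows essentially the same route as the paper: both rescale to a fixed domain (you use the unit sphere, the paper uses explicit polar angles with Jacobian $(\tau-\zeta)^{d-1}q(\bm\varphi)$, which is the same thing), differentiate under the integral for \eqref{eq_brkt-laplace}, and obtain \eqref{eq_brkt-timediff-bdry}--\eqref{eq_brkt-timediff-inter} from Leibniz's rule together with the divergence theorem applied to the radial-derivative term. All three pieces of each time derivative (boundary term at $\zeta=\tau$, power-rule term, chain-rule/inner-limit term) are identified exactly as in the paper's argument.
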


The proof involves only elementary calculations and it will be given in Appendix~\ref{sec_tech}. Now we are able to state the first conclusion.

\begin{lem}[Single hyperbolic systems]\label{lem_single-hyper}
Let $d=1,2,\ldots$. Then

$(1)$\ \ $U_0(\bm x,\tau)$ defined in \eqref{eq_def-U0} satisfies
\begin{equation}\label{eq_gov-U0}
\begin{cases}
\!\begin{alignedat}{2}
& \square U_0(\bm x,\tau)=\begin{cases}
2\,F(x,\tau) & (d=1),\\
(d-1)\,U_1(\bm x,\tau) & (d\ge2)
\end{cases} & \quad & (\bm x\in\mathbb R^d,\ t>0),\\
& U_0(\bm x,0)=\partial_\tau U_0(\bm x,0)=0 & \quad & (\bm x\in\mathbb R^d),
\end{alignedat}
\end{cases}
\end{equation}
where
\begin{equation}\label{eq_def-U1}
U_1(\bm x,\tau):=[1,S_d,\triangle^0](\bm x,\tau)\quad(d\ge2).
\end{equation}

$(2)$\ \ $u(\bm x,t)$ in \eqref{eq_def-u'} satisfies
\begin{equation}\label{eq_gov-u0}
\begin{cases}
\!\begin{alignedat}{2}
& \mathcal P_\rho u(\bm x,t)=\begin{cases}
2\,\alpha(x,t)/\rho(t) & (d=1),\\
(d-1)\,U_1(\bm x,R(t)) & (d\ge2)
\end{cases} & \quad & (\bm x\in\mathbb R^d,\ t>0),\\
& u(\bm x,0)=\partial_tu(\bm x,0)=0 & \quad & (\bm x\in\mathbb R^d).
\end{alignedat}
\end{cases}
\end{equation}
\end{lem}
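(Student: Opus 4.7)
The plan is to deduce Part (2) immediately from Part (1) by invoking the change-of-variable identity \eqref{eq_change-var-id}. Since $U_0(\bm x,\tau) = u(\bm x, R^{-1}(\tau))$ by definition, the pair $(w, W) = (u, U_0)$ fits the template of \eqref{eq_change-var-id}, yielding $\mathcal P_\rho u(\bm x, t) = \square U_0(\bm x, R(t))$ together with $u(\bm x, 0) = U_0(\bm x, 0)$ and $\partial_t u(\bm x, 0) = \rho(0)\,\partial_\tau U_0(\bm x, 0)$. Substituting $\tau = R(t)$ in \eqref{eq_gov-U0} and recalling $F(\bm x, R(t)) = \alpha(\bm x, t)/\rho(t)$ then transcribes \eqref{eq_gov-U0} into \eqref{eq_gov-u0}. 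The whole content of Lemma~\ref{lem_single-hyper} therefore boils down to Part (1), which is a d'Alembertian computation in the straightened time variable $\tau$.

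For Part (1) with $d \ge 2$, I express $U_0 = [0, B_d, \triangle^0]$ in the bracket notation and compute $\partial_\tau^2 U_0$ and $\triangle U_0$ separately via Lemma~\ref{lem_brkt-diff}. Applying \eqref{eq_brkt-timediff-inter} with $k = 0$ gives $\partial_\tau U_0 = [0, S_d, \triangle^0]$; a second application through \eqref{eq_brkt-timediff-bdry} (the branch $k < d-1$, which is valid precisely because $d \geq 2$) then yields
\[\partial_\tau^2 U_0 = (d-1)\,[1, S_d, \triangle^0] + [0, B_d, \triangle^1].\]
On the other hand, \eqref{eq_brkt-laplace} supplies $\triangle U_0 = [0, B_d, \triangle^1]$. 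Subtracting, the $[0, B_d, \triangle^1]$ contributions cancel exactly and only $\square U_0 = (d-1)\,[1, S_d, \triangle^0] = (d-1)\, U_1$ remains. The homogeneous initial data at $\tau = 0$ follow immediately from \eqref{eq_brkt-initial} applied to the brackets $[0, B_d, \triangle^0]$ and $[0, S_d, \triangle^0]$.

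The case $d = 1$ is not covered by Lemma~\ref{lem_brkt-diff} and needs a separate, direct argument. Since $U_0(x,\tau) = \int_0^\tau \int_{x - (\tau - \zeta)}^{x + (\tau - \zeta)} F(y,\zeta)\, \mathrm d y\, \mathrm d \zeta$ is a classical retarded potential of d'Alembert type, differentiating under the integral sign twice in $\tau$ and twice in $x$ (noting that the inner integral vanishes when $\zeta = \tau$, so no boundary contribution arises at that endpoint) produces the identity $\square U_0 = 2F$; the initial conditions are trivial.

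The only genuine obstacle in the whole proof is justifying the exact cancellation of $[0, B_d, \triangle^1]$ in $\partial_\tau^2 U_0 - \triangle U_0$, which hinges on the commuting relation $\triangle [0, B_d, \triangle^0] = [0, B_d, \triangle^1]$ from \eqref{eq_brkt-laplace}. This is part of Lemma~\ref{lem_brkt-diff}, whose verification (postponed to Appendix~\ref{sec_tech}) amounts to passing $\triangle$ under the integral. Once Lemma~\ref{lem_brkt-diff} is granted, Lemma~\ref{lem_single-hyper} is essentially a one-step assembly of the bracket identities.
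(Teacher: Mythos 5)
Your proposal is correct and follows essentially the same route as the paper: part (2) via the substitution $(w,W)=(u,U_0)$ in \eqref{eq_change-var-id}, and part (1) for $d\ge2$ by writing $U_0=[0,B_d,\triangle^0]$ and combining \eqref{eq_brkt-timediff-inter}, \eqref{eq_brkt-timediff-bdry} and \eqref{eq_brkt-laplace} so that the $[0,B_d,\triangle^1]$ terms cancel. One caveat on your $d=1$ aside: the claim that ``no boundary contribution arises'' at $\zeta=\tau$ is true only for the \emph{first} $\tau$-differentiation (where the boundary term is $\int_x^x F\,\mathrm dy=0$); the second $\tau$-differentiation acts on $\int_0^\tau\bigl(F(x+(\tau-\zeta),\zeta)+F(x-(\tau-\zeta),\zeta)\bigr)\mathrm d\zeta$ and \emph{does} produce the boundary term $2F(x,\tau)$ at $\zeta=\tau$, which is precisely the source in $\square U_0=2F$ — if both boundary terms vanished you would get $\square U_0=0$. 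Your stated conclusion is the right one, so this is a slip of wording rather than a gap, but the parenthetical should be corrected.
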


\begin{proof}
(1)\ \ For $d=1$, we return to the original definition \eqref{eq_def-U0} and write
\[U_0(x,\tau)=\int_0^\tau\!\!\int_{x-(\tau-\zeta)}^{x+(\tau-\zeta)}F(y,\zeta)\,\mathrm d y\mathrm d\zeta,\]
following the fundamental differentiations
\begin{align*}
\partial_xU_0(x,\tau) & =\int_0^\tau\left(F(x+(\tau-\zeta),\zeta)-F(x-(\tau-\zeta),\zeta)\right)\mathrm d\zeta,\\
\partial_x^2U_0(x,\tau) & =\int_0^\tau\left(\partial_xF(x+(\tau-\zeta),\zeta)-\partial_xF(x-(\tau-\zeta),\zeta)\right)\mathrm d\zeta,\\
\partial_\tau U_0(x,\tau) & =\int_0^\tau\left(F(x+(\tau-\zeta),\zeta)+F(x-(\tau-\zeta),\zeta)\right)\mathrm d\zeta,\\
\partial_\tau^2U_0(x,\tau) & =2\,F(x,\tau)+\int_0^\tau\left(\partial_xF(x+(\tau-\zeta),\zeta)-\partial_xF(x-(\tau-\zeta),\zeta)\right)\mathrm d\zeta\\
& =\partial_x^2U_0(x,\tau)+2\,F(x,\tau).
\end{align*}
On the other hand, the homogeneous initial condition is easily checked for $d=1$.

Considering dimensions $d\ge2$, we recognize $U_0(\bm x,\tau)=[0,B_d,\triangle^0](\bm x,\tau)$ and apply Lemma~\ref{lem_brkt-diff} with $k=j=0$ to obtain
\begin{align*}
\partial_\tau U_0(\bm x,\tau) & =\partial_\tau[0,B_d,\triangle^0](\bm x,\tau)=[0,S_d,\triangle^0](\bm x,\tau),\\
\partial_\tau^2U_0(\bm x,\tau) & =\partial_t[0,S_d,\triangle^0](\bm x,\tau)=(d-1)\,[1,S_d,\triangle^0](\bm x,\tau)+[0,B_d,\triangle^1](\bm x,\tau)\\
& =\triangle U_0(\bm x,\tau)+(d-1)\,U_1(\bm x,\tau).
\end{align*}
Simultaneously, it follows from \eqref{eq_brkt-initial} that the initial condition is still homogeneous for $d\ge2$. This completes the verification of \eqref{eq_gov-U0}.

(2)\ \ The substitution of $w=u$ and $W=U_0$ in relation \eqref{eq_change-var-id} yields \eqref{eq_gov-u0} immediately from the above result.
\end{proof}

\begin{rem}\label{rem_Duhamel}
The above lemma demonstrates Theorem~\ref{thm_gov-u} for $d=1$ and suggests an inductive approach to higher dimensions. Although one may apply a d'Alembertian once more to $U_1(\bm x,\tau)$ for $d\ge3$ to obtain similar wave equations, another observation provides a straightforward reasoning. Write
\begin{align}
& U_1(\bm x,\tau)=\int_0^\tau V_1(\bm x,\tau;\zeta)\,\mathrm d\zeta\quad\mbox{with}\label{eq_Duhamel}\\
& V_1(\bm x,\tau;\zeta):=\frac1{\tau-\zeta}\int_{S_d(\bm x,\tau-\zeta)}F(\bm y,\zeta)\,\mathrm d\bm\sigma\quad(d\ge2).\label{eq_def-V1}
\end{align}
In view of Duhamel's principle (see, e.g., Evans~\cite{E-2010}), $U_1$ and $V_1$ satisfy the same type of equation with corresponding inhomogeneous right-hand term and initial condition.

(1)\ \ Especially, we claim for $d=3$ that $V_1(\bm x,\tau;\zeta)$ is of the form \eqref{eq_def-V1} if and only if
\begin{equation}\label{eq_Poisson}
\begin{cases}
\square V_1(\bm x,\tau;\zeta)=0 & (\bm x\in\mathbb R^d,\ \tau>\zeta),\\
V_1(\bm x,\tau;\zeta)|_{\tau=\zeta}=0,\ \partial_\tau V_1(\bm x,\tau;\zeta)|_{\tau=\zeta}=4\pi\,F(\bm x,\zeta) & (\bm x\in\mathbb R^d).
\end{cases}
\end{equation}
Actually, under the translation $\xi=\tau-\zeta$, \eqref{eq_Poisson} with $d=3$ is equivalent to
\[\begin{cases}
(\partial_\xi^2-\triangle)V_1(\bm x,\xi+\zeta;\zeta)=0 & (\bm x\in\mathbb R^3,\ \xi>0),\\
V_1(\bm x,\xi+\zeta;\zeta)|_{\xi=0}=0,\ \partial_\xi V_1(\bm x,\xi+\zeta;\zeta)|_{\xi=0}=4\pi\,F(\bm x,\zeta) & (\bm x\in\mathbb R^3).
\end{cases}\]
Noting that the above system is now independent of $\zeta$, we may apply Poisson's formula for the Cauchy problem of the three-dimensional wave equation to obtain
\[V_1(\bm x,\xi+\zeta;\zeta)=\frac1\xi\int_{S_3(\bm x,\xi)}F(\bm y,\zeta)\,\mathrm d\bm\sigma,\]
which is exactly \eqref{eq_def-V1} by replacing $\xi$ with $\tau-\zeta$. On the other hand, Duhamel's principle implies that under the relation \eqref{eq_Duhamel}, system \eqref{eq_Poisson} holds for $V_1(\bm x,\tau;\zeta)$ if and only if $U_1(\bm x,\tau)$ satisfies a wave equation for $d=3$. Consequently, together with Lemma~\ref{lem_single-hyper}(1), it turns out that $U_0(\bm x,t)$ satisfies a double wave equation and thus Theorem~\ref{thm_gov-u} for $d=3$ follows, stimulating the further discussion in higher spatial dimensions.

(2)\ \ However, it follows from \cite[\S2.4.1]{E-2010} that \eqref{eq_def-V1} cannot be the solution to \eqref{eq_Poisson} in even dimensions. Actually, for even $d$ the solution $V_1(\,\cdot\,,\,\cdot\,;\zeta)$ to \eqref{eq_Poisson} is affected by $F(\,\cdot\,,\zeta)$ inside the cone $\{(\bm y,\tau)\mid\tau>\zeta,\ |\bm y-\bm x|<\tau-\zeta\}$, while $V_1$ in \eqref{eq_def-V1} only on the lateral. This indeed coincides with Huygens' principle, namely, functions depending only on a sharp wavefront in even dimensions do not satisfy wave equations.
\end{rem}

\begin{prop}\label{prop_gov-Um}
Let $d\ge2m+1$ with $m=0,1,\ldots$ and $U_0(\bm x,\tau)$ be defined as in \eqref{eq_def-U0}. Then there holds
\begin{equation}\label{eq_gov-Um}
\begin{cases}
\!\begin{alignedat}{2}
& \square U_m=\begin{cases}
2^{m+1}\pi^m\,F & (d=2m+1),\\
(d-(2m+1))\,U_{m+1} & (d>2m+1)
\end{cases} & \quad & \mbox{in }\mathbb R^d\times\mathbb R_+,\\
& U_m(\,\cdot\,,0)=\partial_\tau U_m(\,\cdot\,,0)=0 & \quad & \mbox{in }\mathbb R^d,
\end{alignedat}
\end{cases}
\end{equation}
where we have for $m\ge1$ that
\begin{align}
& U_m=\sum_{k=1}^mc_m^k\,P_m^k(d)\,[2m-k,\partial^{(1-(-1)^k)/2}B_d,\triangle^{\lfloor k/2\rfloor}],\quad\mbox{in particular}\label{eq_exp-Um}\\
& P_m^m(d)=1,\quad P_m^k(d)=(d-2(m-\lfloor(k+1)/2\rfloor))\,P_{m-1}^k(d)\ (1\le k\le m-1),\label{eq_exp-Pmk}\\
& c_m^1=c_m^m=1,\quad c_m^k=\begin{cases}
c_{m-1}^{k-1} & (k\mbox{ even}),\\
c_{m-1}^{k-1}+c_{m-1}^k & (k\mbox{ odd})
\end{cases}\ (2\le k\le m-1).\label{eq_exp-cmk}
\end{align}
Here we understand $\partial^1B_d=S_d,\ \partial^0B_d=B_d,\ \lfloor\,\cdot\,\rfloor$ denotes the integer part of a positive number, and those terms without definitions automatically vanish.
\end{prop}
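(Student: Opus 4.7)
The plan is to prove Proposition~\ref{prop_gov-Um} by induction on $m$, using Lemma~\ref{lem_brkt-diff} as the sole computational tool. The base case $m = 0$ is Lemma~\ref{lem_single-hyper}(1), and $U_1 = [1, S_d, \triangle^0]$ agrees with \eqref{eq_exp-Um} at $m = 1$ since $c_1^1 = P_1^1(d) = 1$. For the inductive step I would first record, as a direct consequence of Lemma~\ref{lem_brkt-diff}, the two-term identities
\[\square [k, S_d, \triangle^j] = (d-k-1)(d-k-2)\,[k+2, S_d, \triangle^j] + (d-2k-1)\,[k+1, B_d, \triangle^{j+1}]\]
for $k < d-1$ and
\[\square [k, B_d, \triangle^j] = (d-2k-1)\,[k+1, S_d, \triangle^j] + k(k+1)\,[k+2, B_d, \triangle^j].\]
In both identities the expected cross term of type $[k,\partial^{\varepsilon_k}B_d,\triangle^{j+1}]$ produced by the second $\partial_\tau$ is precisely canceled by $\triangle$ applied to the original bracket, which is why $\square$ of a single bracket is cleaner than $\partial_\tau^2$ alone.

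Applied term by term to \eqref{eq_exp-Um}, each bracket in $U_m$ produces exactly two brackets in $\square U_m$, one with the $k$-index shifted by $+2$ (chain $S\to S$ or $B\to B$) and one with the $k$-index shifted by $+1$ (cross $S\to B$ or $B\to S$). Writing $\varepsilon_k := (1-(-1)^k)/2$ and $\iota_k := \lfloor k/2\rfloor$, a parity check shows that in the regrouped sum labeled by the new index $k'\in\{1,\ldots,m+1\}$, each odd $k'$ collects contributions from $S\to S$ out of $k = k'$ and from $B\to S$ out of $k = k'-1$, while each even $k'$ collects from $B\to B$ out of $k = k'$ and from $S\to B$ out of $k = k'-1$. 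The coefficient on each target bracket should simplify to $(d-(2m+1))\,c_{m+1}^{k'}P_{m+1}^{k'}(d)$. Concretely, the factoring of $(d-(2m+1))$ reduces to an algebraic identity of the shape
\[c_m^{k'} P_m^{k'}(d)\cdot A(d,k') + c_m^{k'-1} P_m^{k'-1}(d)\cdot B(d,k') = (d-(2m+1))\, c_{m+1}^{k'} P_{m+1}^{k'}(d),\]
where $A$ and $B$ are the quadratic and linear polynomials in $d$ read off from the two identities above. Unpacking this yields precisely the recursion \eqref{eq_exp-Pmk} for $P_m^k$, and the Pascal-type recursion \eqref{eq_exp-cmk} for $c_m^k$ emerges because odd-$k'$ brackets absorb two chain contributions of matching weight while even-$k'$ brackets inherit from a single source, forcing $c_{m+1}^{k'} = c_m^{k'-1}$ there.

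At the critical dimension $d = 2m+1$ the chain $[2m-1,S_d,\triangle^0] \to [2m,S_d,\triangle^0]$ reaches the boundary $k = d-1$, where the second line of \eqref{eq_brkt-timediff-bdry} replaces the next $\partial_\tau$ by $\sigma_d F + [2m, B_d, \triangle^1]$. The only summand of $U_m$ that feeds this chain is $c_m^1 P_m^1(d)[2m-1,S_d,\triangle^0]$ with $c_m^1 = 1$ and $P_m^1(2m+1) = (2m-1)!!$, and the first $\partial_\tau$ step carries the coefficient $d - 2m = 1$, so the total coefficient of $F$ becomes $(2m-1)!!\,\sigma_{2m+1}$. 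The classical identity $\sigma_{2m+1} = 2^{m+1}\pi^m/(2m-1)!!$ then yields exactly $2^{m+1}\pi^m$, matching \eqref{eq_gov-Um}. The homogeneous initial conditions are immediate from \eqref{eq_brkt-initial}, since every bracket appearing in $U_m$, as well as in $\partial_\tau U_m$ (which is again a sum of brackets via Lemma~\ref{lem_brkt-diff}), has its $k$-index within the admissible range.

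The principal obstacle will be the combinatorial bookkeeping hidden in the above factoring identity: extracting the common factor $(d-(2m+1))$ requires a nontrivial cancellation between the $S\to S$ and $B\to S$ contributions for odd $k'$ (and between the $B\to B$ and $S\to B$ contributions for even $k'$), and this is where the precise form of the recursions \eqref{eq_exp-Pmk} and \eqref{eq_exp-cmk} is forced upon us. Organizing the verification into a small table of four parity cases, and treating the extreme indices $k' = 1$ and $k' = m+1$ (where one of the two contributions is absent) separately, should make the induction manageable.
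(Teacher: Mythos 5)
Your proposal is correct and follows essentially the same route as the paper: induction on $m$, term-by-term application of Lemma~\ref{lem_brkt-diff}, regrouping by parity into the brackets of $U_{m+1}$, and reducing everything to coefficient identities in $c_m^k$ and $P_m^k(d)$, with the same special handling of the boundary case $k=d-1$ producing the $\sigma_{2m+1}F$ term and of the homogeneous initial data via \eqref{eq_brkt-initial}. The only organizational difference is that the paper applies $\partial_\tau$ twice to the whole sum and then subtracts $\triangle U_m$ (introducing intermediate coefficients $Q_m^k$), whereas your bracket-wise $\square$ identities perform that cancellation up front; be aware that the deferred ``combinatorial bookkeeping'' is the bulk of the work and in the paper rests on the auxiliary relations of Lemma~\ref{lem_tech}, which themselves need separate inductive proofs from the recursions \eqref{eq_exp-Pmk}--\eqref{eq_exp-cmk}.
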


The verification of the above conclusion requires a technical lemma, and the proof is postponed to Appendix~\ref{sec_tech}.

\begin{lem}\label{lem_tech}
Let the integers $c_m^k\ (m=1,2,\ldots,\ 1\le k\le m)$ be defined as in $\eqref{eq_exp-Pmk}$ and $\eqref{eq_exp-cmk}$. Then

$(1)$\ \ For $m\ge2$ and $2\le k\le m,$ we have
\begin{equation}\label{eq_rel-Pmk}
P_m^{k-1}(d)=((d-m)+(-1)^k(m-2\lfloor k/2\rfloor))\,P_m^k(d).
\end{equation}

$(2)$\ \ For $m\ge3$ and $2\le k\le m-1,$ we have
\begin{equation}\label{eq_rel-cmk}
2(m-k)\,c_m^k=\begin{cases}
k\,c_m^{k+1} & (k\mbox{ even}),\\
(2m-k-1)\,c_m^{k+1} & (k\mbox{ odd}).
\end{cases}
\end{equation}
\end{lem}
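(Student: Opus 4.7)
For Part (1), I would iterate the recursion $\eqref{eq_exp-Pmk}$ from $P_k^k(d)=1$ up to $P_m^k(d)$ to obtain the closed product form
\[
P_m^k(d) = \prod_{j=k+1}^{m}\bigl(d - 2(j - \lfloor (k+1)/2\rfloor)\bigr),\qquad 1\le k\le m,
\]
with the empty product convention when $k=m$. The claimed identity $\eqref{eq_rel-Pmk}$ then reduces to a direct computation of the ratio $P_m^{k-1}(d)/P_m^k(d)$, whose index ranges differ by a single term and whose interior shifts $\lfloor k/2\rfloor$ and $\lfloor (k+1)/2\rfloor$ either coincide (when $k$ is even) or differ by one (when $k$ is odd). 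In the even case, almost every factor cancels and a single factor $d-k$ survives; in the odd case, a telescoping-type reindexing leaves $d-2m+k-1$. Both values are easily checked to match the prescribed $(d-m)+(-1)^k(m-2\lfloor k/2\rfloor)$ upon substituting the appropriate value of $\lfloor k/2\rfloor$, completing Part (1).

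For Part (2), the cleanest route I see is to first identify $c_m^k$ as a binomial coefficient. Computation of the first several rows using $\eqref{eq_exp-cmk}$ suggests the closed form
\[
c_m^k = \binom{m-1-\lceil (k-1)/2\rceil}{\lfloor (k-1)/2\rfloor},
\]
which I would verify by induction on $m$: for $k$ even, the closed-form expressions at $(m,k)$ and $(m-1,k-1)$ coincide by direct substitution, reproducing $c_m^k=c_{m-1}^{k-1}$; for $k$ odd, the corresponding binomial identity is precisely Pascal's rule $\binom{n-1}{r-1}+\binom{n-1}{r}=\binom{n}{r}$ with $n=m-1-(k-1)/2$ and $r=(k-1)/2$. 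With the closed form in hand, $\eqref{eq_rel-cmk}$ follows from the standard identity $r\binom{n}{r}=(n-r+1)\binom{n}{r-1}$ applied with $r=k/2$, $n=m-1-k/2$ in the even case (yielding the factor $k$), and with $r=(k-1)/2$, $n=m-1-(k-1)/2$ in the odd case (yielding the factor $2m-k-1$, since $2n=2m-k-1$).

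The main technical hurdle is guessing the correct closed form for $c_m^k$; once it is in place, both the induction and the final binomial manipulation are routine. A fallback that avoids the guess is to prove $\eqref{eq_rel-cmk}$ by strong induction on $m$, case-splitting on the parities of $k$ and $k+1$ and applying $\eqref{eq_exp-cmk}$ directly. This also works but is rather more tedious, owing to the four parity combinations and the boundary situations $k=2$ and $k=m-1$, where one must invoke the base values $c_m^1=c_m^m=1$ rather than the general recursion.
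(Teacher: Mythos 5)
Your proposal is correct, but it takes a genuinely different route from the paper. The paper proves both parts by induction on $m$: for (1) it starts from $P_2^1(d)=d-2$, $P_2^2(d)=1$ and pushes the relation from $m$ to $m+1$ via the recursion \eqref{eq_exp-Pmk}, reducing to a polynomial identity checked by the parity of $k$; for (2) it likewise inducts from $c_3^2=1$ using \eqref{eq_exp-cmk} with a parity case split (your ``fallback'' is essentially the paper's actual argument). You instead solve both recursions in closed form: the telescoped product $P_m^k(d)=\prod_{j=k+1}^{m}(d-2(j-\lfloor(k+1)/2\rfloor))$, from which the ratio $P_m^{k-1}/P_m^k$ collapses to $d-k$ or $d-2m+k-1$ exactly as you say, and the binomial formula $c_m^k=\binom{m-1-\lceil(k-1)/2\rceil}{\lfloor(k-1)/2\rfloor}$, which I have checked does satisfy \eqref{eq_exp-cmk} and the boundary values. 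Your approach buys more: the closed forms are reusable (the paper in fact rederives $P_{m+1}^1(d)=\prod_{j=1}^m(d-2j)$ separately inside the proof of Proposition~\ref{prop_gov-Um}, which your product formula gives for free) and they make transparent \emph{why} the identities hold, at the cost of having to guess and verify the formula for $c_m^k$. One small imprecision: in the odd-$k$ case of (2) the identity you cite, $r\binom{n}{r}=(n-r+1)\binom{n}{r-1}$, keeps the upper index fixed, whereas there you need to relate $\binom{n}{r}$ to $\binom{n-1}{r}$; the identity actually used is the absorption relation $(n-r)\binom{n}{r}=n\binom{n-1}{r}$ with $n=m-1-(k-1)/2$, $r=(k-1)/2$, which yields $2(m-k)\,c_m^k=(2m-k-1)\,c_m^{k+1}$ as intended (your remark $2n=2m-k-1$ shows this is what you meant).
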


\noindent{\it Proof of Proposition~$\ref{prop_gov-Um}$.} It is natural to adopt an inductive argument since the result for $m=0$ has been proved in Lemma~\ref{lem_single-hyper}(1). Thus it suffices to show for some $m\ge1$ that

(a)\ \ $U_m$ in \eqref{eq_exp-Um}--\eqref{eq_exp-cmk} satisfies the wave system \eqref{eq_gov-Um}, and

(b)\ \ for $d>2m+1$, $\square U_m/(d-(2m+1))$ preserves expression \eqref{eq_exp-Um}--\eqref{eq_exp-cmk} for $m+1$.

\noindent To this end, first we unify \eqref{eq_brkt-timediff-bdry}--\eqref{eq_brkt-timediff-inter} in Lemma~\ref{lem_brkt-diff} succinctly and substitute $k$ with $2m-k$ to derive
\begin{align*}
& \quad\,\,\partial_\tau[2m-k,\partial^{(1-(-1)^k)/2}B_d,\triangle^{\lfloor k/2\rfloor}]\\
& =\left((d-1)\frac{1-(-1)^k}2-2m+k\right)[2m-k+1,\partial^{(1-(-1)^k)/2}B_d,\triangle^{\lfloor k/2\rfloor}]\\
& \quad\,+[2m-k,\partial^{(1-(-1)^{k+1})/2}B_d,\triangle^{\lfloor(k+1)/2\rfloor}],
\end{align*}
yielding
\begin{align*}
& \quad\,\,\partial_\tau U_m=\sum_{k=1}^mc_m^k\,P_m^k(d)\,\partial_\tau[2m-k,\partial^{(1-(-1)^k)/2}B_d,\triangle^{\lfloor k/2\rfloor}]\\
& =\sum_{k=1}^mc_m^k\,P_m^k(d)\left\{\left((d-1)\frac{1-(-1)^k}2-2m+k\right)[2m-k+1,\partial^{(1-(-1)^k)/2}B_d,\triangle^{\lfloor k/2\rfloor}]\right.\\
& \qquad\qquad\qquad\quad\;\;\,+[2m-k,\partial^{(1-(-1)^{k+1})/2}B_d,\triangle^{\lfloor(k+1)/2\rfloor}]\bigg\}\\
& =(d-2m)\,P_m^1(d)\,[2m,S_d,\triangle^0]\\
& \quad\,+\sum_{k=2}^mc_m^k\,P_m^k(d)\left((d-1)\frac{1-(-1)^k}2-2m+k\right)[2m-k+1,\partial^{(1-(-1)^k)/2}B_d,\triangle^{\lfloor k/2\rfloor}]\\
& \quad\,+\sum_{k=2}^mc_m^{k-1}\,P_m^{k-1}(d)\,[2m-k+1,\partial^{(1-(-1)^k)/2}B_d,\triangle^{\lfloor k/2\rfloor}]\\
& \quad\,+[m,\partial^{(1-(-1)^{m+1})/2}B_d,\triangle^{\lfloor(m+1)/2\rfloor}]\\
& =P_{m+1}^1(d)\,[2m,S_d,\triangle^0]+\widehat U_m,
\end{align*}
where
\begin{align*}
\widehat U_m & :=\sum_{k=2}^mQ_m^k(d)\,[2m-k+1,\partial^{(1-(-1)^k)/2}B_d,\triangle^{\lfloor k/2\rfloor}]\\
& \quad\,+[m,\partial^{(1-(-1)^{m+1})/2}B_d,\triangle^{\lfloor(m+1)/2\rfloor}],
\end{align*}
in particular
\[Q_m^k(d):= c_m^k\,P_m^k(d)\left((d-1)\frac{1-(-1)^k}2-2m+k\right)+c_m^{k-1}\,P_m^{k-1}(d)\quad(k=2,\ldots,m).\]
Here we have applied \eqref{eq_exp-Pmk} with $m$ replaced by $m+1$ to get $(d-2m)\,P_m^1(d)=P_{m+1}^1(d)$. Meanwhile, using the fact that $d\ge2m+1$, we may apply \eqref{eq_brkt-initial} to argue that each integral bracket in $U_m$ and $\partial_\tau U_m$ vanish at $\tau=0$ and hence $\eqref{eq_gov-Um}_2$ holds.

Furthermore, we employ a similar argument for $\widehat U_m$ to obtain
\begin{align*}
& \quad\,\,\partial_\tau\widehat U_m\\
& =\sum_{k=2}^mQ_m^k(d)\left\{\left((d-1)\frac{1-(-1)^k}2-2m+k-1\right)[2m-k+2,\partial^{(1-(-1)^k)/2}B_d,\triangle^{\lfloor k/2\rfloor}]\right.\\
& \qquad\qquad\qquad\;\:+[2m-k+1,\partial^{(1-(-1)^{k+1})/2}B_d,\triangle^{\lfloor(k+1)/2\rfloor}]\bigg\}\\
& \quad\,+\left((d-1)\frac{1-(-1)^{m+1}}2-m\right)[m+1,\partial^{(1-(-1)^{m+1})/2}B_d,\triangle^{\lfloor(m+1)/2\rfloor}]\\
& \quad\,+[m,\partial^{(1-(-1)^m)/2}B_d,\triangle^{\lfloor m/2\rfloor+1}]\\
& =-(2m-1)\,P_{m+1}^2(d)\,[2m,B_d,\triangle^1]\\
& \quad\,+\sum_{k=1}^{m-2}\left\{Q_m^{k+2}(d)\left((d-1)\frac{1-(-1)^k}2-2m+k+1\right)+Q_m^{k+1}(d)\right\}\\
& \qquad\qquad\:\!\times[2m-k,\partial^{(1-(-1)^k)/2}B_d,\triangle^{\lfloor k/2\rfloor+1}]\\
& \quad\,+\left\{Q_m^m(d)+\left((d-1)\frac{1-(-1)^{m-1}}2-m\right)\right\}[m+1,\partial^{(1-(-1)^{m-1})/2}B_d,\triangle^{\lfloor(m-1)/2\rfloor+1}]\\
& \quad\,+[m,\partial^{(1-(-1)^m)/2}B_d,\triangle^{\lfloor m/2\rfloor+1}],
\end{align*}
where we have used \eqref{eq_exp-cmk}, \eqref{eq_exp-Pmk} and Lemma~\ref{lem_tech}(1) to find
\[Q_m^2(d)=c_m^2\,P_m^2(d)\,(-2m+2)+c_m^1\,P_m^1(d)=(d-2m)\,P_m^2(d)=P_{m+1}^2(d).\]
On the other hand, we differentiate $[2m,S_d,\triangle^0]$ with respect to $d$ to proceed
\begin{align*}
\partial_\tau^2U_m & =P_{m+1}^1(d)\,\partial_\tau[2m,S_d,\triangle^0]+\partial_\tau\widehat U_m\\
& =\begin{cases}
\!\begin{alignedat}{2}
& P_{m+1}^1(d)\left(\sigma_d\,F+[2m,B_d,\triangle^1]\right)+\partial_\tau\widehat U_m & & (d=2m+1),\\
& P_{m+1}^1(d)\left((d-2m-1)\,[2m+1,S_d,\triangle^0]+[2m,B_d,\triangle^1]\right)\\
& +\partial_\tau\widehat U_m & \quad & (d>2m+1),
\end{alignedat}
\end{cases}
\end{align*}
while
\[\triangle U_m=\sum_{k=1}^mc_m^k\,P_m^k(d)\,[2m-k,\partial^{(1-(-1)^k)/2}B_d,\triangle^{\lfloor k/2\rfloor+1}].\]
Representing $\eqref{eq_gov-Um}_1$ by the above expressions and comparing the both sides, we claim that it suffices to prove for $d\ge2m+1$ that
\begin{equation}\label{eq_claim-1}
\begin{aligned}
& \sigma_{2m+1}\,P_{m+1}^1(2m+1)=2^{m+1}\pi^m,\\
& P_{m+1}^1(d)-(2m-1)\,P_{m+1}^2(d)=(d-2m-1)\,c_{m+1}^2\,P_{m+1}^2(d)
\end{aligned}\quad(m\ge1),
\end{equation}
and
\begin{align}
& \quad\,\,Q_m^m(d)+\left((d-1)\frac{1-(-1)^{m-1}}2-m\right)-c_m^{m-1}\,P_m^{m-1}(d)\nonumber\\
& =(d-2m-1)\,c_{m+1}^{m+1}\,P_{m+1}^{m+1}(d)\quad(m\ge2),\label{eq_claim-2}\\
& \quad\,\,Q_m^{k+2}(d)\left((d-1)\frac{1-(-1)^k}2-2m+k+1\right)+Q_m^{k+1}(d)-c_m^k\,P_m^k(d)\nonumber\\
& =(d-2m-1)\,c_{m+1}^{k+2}\,P_{m+1}^{k+2}(d)\quad(m\ge3,\ k=1,\ldots,m-2).\label{eq_claim-3}
\end{align}
In fact, as long as \eqref{eq_claim-1}--\eqref{eq_claim-3} are valid, requirements (a) and (b) are satisfied simultaneously and the proof is complete.

First, the repeated applications of Lemma~\ref{lem_tech}(1) with $m$ replaced by $m+1$ yields
\[P_{m+1}^1(d)=(d-2)\,P_{m+1}^2(d)=(d-2)(d-2m)\,P_{m+1}^3(d)=\cdots=\prod_{j=1}^m(d-2j),\]
which, together with the fact $\sigma_{2m+1}=2^{m+1}\pi^m/(2m-1)!!$, leads to
\[\sigma_{2m+1}\,P_{m+1}^1(2m+1)=\frac{2^{m+1}\pi^m}{(2m-1)!!}\prod_{j=1}^m(2m+1-2j)=2^{m+1}\pi^m\]
and meanwhile
\begin{align*}
P_{m+1}^1(d)-(2m-1)\,P_{m+1}^2(d) & =(d-2)\,P_{m+1}^2(d)-(2m-1)\,P_{m+1}^2(d)\\
& =(d-2m-1)\,c_{m+1}^2\,P_{m+1}^2(d),
\end{align*}
that is, \eqref{eq_claim-1}. Next, it follows from the expansion of $Q_m^m(d)$ that
\begin{align*}
& \quad\,\,Q_m^m(d)+\left((d-1)\frac{1-(-1)^{m-1}}2-m\right)-c_m^{m-1}\,P_m^{m-1}(d)\\
& =\left((d-1)\frac{1-(-1)^m}2-m\right)+c_m^{m-1}\,P_m^{m-1}(d)+\left((d-1)\frac{1-(-1)^{m-1}}2-m\right)\\
& \quad\,-c_m^{m-1}\,P_m^{m-1}(d)\\
& =d-2m-1=(d-2m-1)\,c_{m+1}^{m+1}\,P_{m+1}^{m+1}(d)
\end{align*}
or \eqref{eq_claim-2}. Similarly, we expand $Q_m^{k+1}(d)$ and $Q_m^{k+2}(d)$ for $k=1,\ldots,m-2$ and utilize Lemma~\ref{lem_tech}(1) to calculate
\begin{align*}
& \quad\,\,Q_m^{k+2}(d)\left((d-1)\frac{1-(-1)^k}2-2m+k+1\right)+Q_m^{k+1}(d)-c_m^k\,P_m^k(d)\\
& =\left\{c_m^{k+2}\,P_m^{k+2}(d)\left((d-1)\frac{1-(-1)^k}2-2m+k+2\right)+c_m^{k+1}\,P_m^{k+1}(d)\right\}\\
& \quad\,\times\left((d-1)\frac{1-(-1)^k}2-2m+k+1\right)\\
& \quad\,+\left\{c_m^{k+1}\,P_m^{k+1}(d)\left((d-1)\frac{1-(-1)^{k+1}}2-2m+k+1\right)+c_m^k\,P_m^k(d)\right\}-c_m^k\,P_m^k(d)\\
& =\left\{c_m^{k+2}\left((d-1)\frac{1-(-1)^k}2-2m+k+2\right)\left((d-1)\frac{1-(-1)^k}2-2m+k+1\right)\right.\\
& \quad\,+c_m^{k+1}\,(d-4m+2k+1)(d-m+(-1)^k(m-2\lfloor k/2\rfloor-2))\bigg\}\,P_m^{k+2}(d)\\
& =\begin{cases}
\!\begin{alignedat}{2}
& \left\{c_m^{k+2}\,(2m-k-2)(2m-k-1)+c_m^{k+1}\,(d-4m+2k+1)(d-k-2)\right\}\\
& \times P_m^{k+2}(d) & \quad & (k\mbox{ even}),\\
& \left\{(d-2m+k+1)\left(c_m^{k+2}\,(d-2m+k)+c_m^{k+1}\,(d-4m+2k+1)\right)\right\}\\
& \times P_m^{k+2}(d) & \quad & (k\mbox{ odd}).
\end{alignedat}
\end{cases}
\end{align*}
On the other hand, it immediately follows from \eqref{eq_exp-Pmk} that
\[P_{m+1}^{k+2}(d)=(d-2(m-\lfloor(k+1)/2\rfloor))\,P_m^{k+2}(d)=\begin{cases}
(d-2m+k)\,P_m^{k+2}(d) & (k\mbox{ even}),\\
(d-2m+k+1)\,P_m^{k+2}(d) & (k\mbox{ odd}).
\end{cases}\]
Therefore, by Lemma~\ref{lem_tech}(2) and \eqref{eq_exp-cmk}, we obtain for even $k$ that
\begin{align*}
& \quad\,\,c_m^{k+2}\,(2m-k-2)(2m-k-1)+c_m^{k+1}\,(d-4m+2k+1)(d-k-2)\\
& =c_m^{k+1}\,\{2(m-k-1)(2m-k-1)+(d-4m+2k+1)(d-k-2)\}\\
& =c_m^{k+1}\,(d-2m-1)(d-2m+k)=c_{m+1}^{k+2}\,(d-2m-1)(d-2m+k),
\end{align*}
and parallelly for odd $k$ that
\begin{align*}
& \quad\,\,(d-2m+k+1)\left(c_m^{k+2}\,(d-2m+k)+c_m^{k+1}\,(d-4m+2k+1)\right)\\
& =(d-2m+k+1)\left\{\left((k+1)c_m^{k+2}-2(m-k-1)c_m^{k+1}\right)+(c_m^{k+1}+c_m^{k+2})(d-2m-1)\right\}\\
& =c_{m+1}^{k+2}\,(d-2m-1)(d-2m+k+1).
\end{align*}
In other words, we balance the both sides of \eqref{eq_claim-3}, which finishes the proof.\hfill$\square$\smallskip

At this stage, the main conclusion of multiple hyperbolic systems degenerates to a straightforward corollary of the above result.\smallskip

\noindent{\it Proof of Theorem~$\ref{thm_gov-u}$.} In sense of Proposition~\ref{prop_gov-Um}, it suffices to show by induction on $m$ that for $m=0,1,\ldots$ and $d\ge2m+1$, there holds
\begin{equation}\label{eq_induct}
\begin{cases}
\!\begin{alignedat}{2}
& \mathcal P_\rho^{m+1}u=\begin{cases}
\!\begin{alignedat}{2}
& (2m)!!\,2^{m+1}\pi^m\,\alpha/\rho & \quad & (d=2m+1),\\
& \prod_{k=0}^m(d-2k-1)\,U_{m+1}(\,\cdot\,,R(\,\cdot\,)) & & (d>2m+1)
\end{alignedat}
\end{cases} & & \mbox{in }\mathbb R^d\times\mathbb R_+,\\
& \partial_t^ju(\,\cdot\,,0)=0,\quad j=0,1,\ldots,2m+1 & & \mbox{in }\mathbb R^d.
\end{alignedat}
\end{cases}
\end{equation}
The result for $m=0$ was obtained in Lemma~\ref{lem_single-hyper}(2). In order to verify \eqref{eq_induct} for each $m\ge1$, we suppose the validity for some $m-1$, especially there holds for $d\ge2m+1$ that
\[\begin{cases}
\!\begin{alignedat}{2}
& \mathcal P_\rho^mu(\bm x,t)=\prod_{k=0}^{m-1}(d-2k-1)\,U_m(\bm x,R(t)) & \quad & (\bm x\in\mathbb R^d,\ t>0),\\
& \partial_t^ju(\bm x,0)=0,\quad j=0,1,\ldots,2m-1 & & (\bm x\in\mathbb R^d).
\end{alignedat}
\end{cases}\]
Taking $w:=\mathcal P_\rho^mu$, then in view of \eqref{eq_change-var-id} we find
\[W(\bm x,\tau)=w(\bm x,R^{-1}(\tau))=\left.\mathcal P_\rho^mu(\bm x,t)\right|_{t=R^{-1}(\tau)}=\prod_{k=0}^{m-1}(d-2k-1)\,U_m(\bm x,\tau),\]
where $U_m$ satisfies \eqref{eq_gov-Um} by Proposition~\ref{prop_gov-Um}. This, together with \eqref{eq_change-var-id}, yields immediately
\begin{align*}
\mathcal P_\rho^{m+1}u(\bm x,t) & =\mathcal P_\rho w(\bm x,t)=\square W(\bm x,R(t))=\prod_{k=0}^{m-1}(d-2k-1)\,\square U_m(\bm x,R(t))\\
& =\begin{cases}
\!\begin{alignedat}{2}
& \prod_{k=0}^{m-1}(d-2k-1)\,2^{m+1}\pi^m\,F(\bm x,R(t)) & \quad & (d=2m+1)\\
& \prod_{k=0}^{m-1}(d-2k-1)\,(d-2m-1)\,U_{m+1}(\bm x,R(t)) & & (d>2m+1)
\end{alignedat}
\end{cases}\\
& =\begin{cases}
\!\begin{alignedat}{2}
& (2m)!!\,2^{m+1}\pi^m\,\alpha(\bm x,t)/\rho(t) & \quad & (d=2m+1)\\
& \prod_{k=0}^m(d-2k-1)\,U_{m+1}(\bm x,R(t)) & & (d>2m+1)
\end{alignedat}
\end{cases}(\bm x\in\mathbb R^d,\ t>0),
\end{align*}
while the initial condition for $d\ge2m+1$ reads
\begin{align*}
\mathcal P_\rho^mu(\bm x,0) & =\prod_{k=0}^{m-1}(d-2k-1)\,U_m(\bm x,0)=0,\\
\partial_t\mathcal P_\rho^mu(\bm x,0) & =\rho(0)\prod_{k=0}^{m-1}(d-2k-1)\,\partial_\tau U_m(\bm x,0)=0.
\end{align*}
Since $\rho(0)\ne0$ and $\mathcal P_\rho^mu(\bm x,0)$ is now a linear combination of $\partial_t^ju(\bm x,0)$ ($j=0,\ldots,2m-1,2m$), it follows from the inductive assumption on the homogeneous initial condition for lower order time derivatives than $2m$ that $\partial_t^{2m}u(\bm x,0)=0$ and thus $\partial_t^{2m+1}u(\bm x,0)=0$ ($\bm x\in\mathbb R^d$). This completes the demonstration of \eqref{eq_induct} for $m\ge1$ and hence Theorem~\ref{thm_gov-u}.\hfill$\square$

\Section{Numerical Simulations for Forward Problems}\label{sec_numer}

In this section, we implement numerical computations for forward problems in practical dimensions, namely, solving for the expectation number $u(\bm x,t)$ of transformation events by given (discrete) data of $\alpha(\bm x,t)$ and $\rho(t)$ with $d=1,2,3$. It will be demonstrated that even finite difference schemes for the derived hyperbolic-type systems can dramatically improve the efficiency of simulations compared with direct approaches based on \eqref{eq_def-u}--\eqref{eq_def-timecone}.

Throughout this section, we consider the systems in the time interval $[0,T]$ and assume the periodicity of $u(\bm x,t)$ in space. More precisely, it is supposed, e.g. for $d=3$, that there exist $L_\ell>0$ ($\ell=1,2,3$) such that
\[u(x_1+i\,L_1,x_2+j\,L_2,x_3+k\,L_3,t)=u(x_1,x_2,x_3,t)\quad(\forall\,i,j,k\in\mathbb Z),\]
so that it suffices to restrict the systems in $\prod_{\ell=1}^d[0,L_\ell]\times[0,T]$ ($d=1,2,3$) and impose the periodic boundary conditions. Thus the data of $\alpha$ and $\rho$ are assigned only on the knots
\[0=t_0<t_1<\cdots<t_{N_t}=T,\quad0=x_\ell^1<x_\ell^2<\cdots<x_\ell^{N_\ell}=L_\ell\ (1\le\ell\le d).\]
Without lose of generality we assume an equidistant lattice in space, that is, $x_\ell^i=(i-1)\,\Delta s\ (i=1,\ldots,N_\ell)$ with the step size $\Delta s>0$.

Although one may solve for the unknown $u$ by, e.g., hyperbolic-type systems \eqref{eq_gov-u-odd} with the periodic boundary condition when $d=1,3$, it is advantageous to consider the wave-type systems \eqref{eq_gov-Um} for $U_m$ and utilize the relation $u(\,\cdot\,,t)=U_0(\,\cdot\,,R(t))$ instead. In this manner we can not only circumvent the numerical differentiation problem for $\rho$, but also simplify the choice of the step length in time.

Such a consideration of the equivalent systems relies obviously on the knowledge of the change of variable $\tau=R(t)=\int_0^t\rho(s)\,\mathrm ds$, whose accurate value is absent due to the discrete data $\{\rho(t_n)\}_{n=0}^{N_t}$. Hence we shall first apply, for instance, a composite trapezoid quadrature to provide a piecewise linear approximation of $R(t)$, say $\widehat R(t)$. Now that $U_0$ satisfies a wave equation with the unit propagation speed, we may partition the alternative time interval $[0,\widehat R(T)]$ of $\tau$ by a uniform step length $\Delta\tau>0$, yielding the knots $\tau_n=n\,\Delta\tau\ (n=0,1,\ldots,N_\tau)$ with $N_\tau\,\Delta\tau=\widehat R(T)$. Note that $\Delta\tau$ is required to satisfy the Courant-Friedrichs-Lewy condition $\sqrt d\,\Delta\tau\le\Delta s$ when using an explicit scheme of the finite difference method, which can be loosen or removed if some weighted multilevel schemes are employed. For later use we introduce the ratio $r:=(\Delta\tau/\Delta s)^2$.

Thanks to the strict positivity of $\rho$, it is easy to find out an increasing sequence $\{\widehat t_n\}_{n=0}^{N_\tau}$ such that $\widehat R(\widehat t_n)=\tau_n$. Thus, the estimation of $U_0(\,\cdot\,,\tau_n)$ stands for a reasonable approximation of $u(\,\cdot\,,\widehat t_n)$ due to the relation \eqref{eq_def-U0}. Moreover, we recognize that the equidistant partition in $\tau$ corresponds with a self-adaptive partition in $t$, i.e., the knots $\{\widehat t_n\}$ accumulate where $\rho$ is large while are sparsely distributed for small $\rho$ (see Figure~\ref{fig_self-adaptive}).
\begin{figure}[htbp]\centering
\includegraphics[width=.9\textwidth]{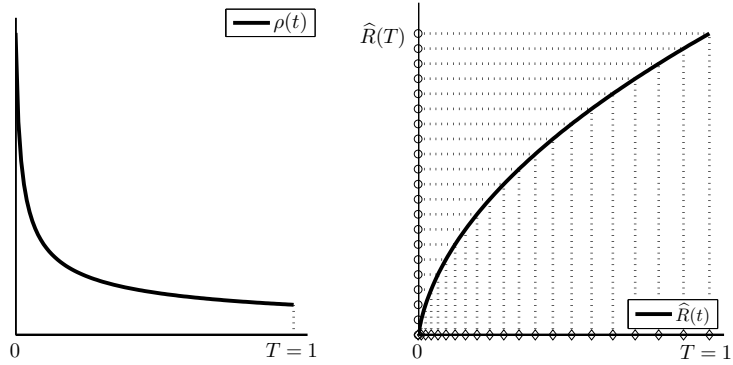}\\
\caption{\footnotesize{An example of the self-adaptiveness. Left figure: plot of $\rho(t)=(t+0.01)^{-1/2}$. Right figure: plot of $\widehat R(t)\approx2(\sqrt{t+0.01}-0.1)$, $\mbox{circle}=\tau_n$ and $\mbox{diamond}=\widehat t_n$.}}\label{fig_self-adaptive}
\end{figure}

Interpreting $\alpha$ and $\rho$ as piecewise linear, we may obtain the term \[F(\,\cdot\,,\tau_n)=\frac{\alpha(\,\cdot\,,R^{-1}(\tau_n))}{\rho(R^{-1}(\tau_n))}\approx\frac{\alpha(\,\cdot\,,\widehat t_n)}{\rho(\widehat t_n)}\quad(0\le n\le N_\tau)\]
by interpolating the discrete data $\{\alpha(\,\cdot\,,t_n),\rho(t_n)\}_{n=0}^{N_t}$. For $d=3$, we denote $F_n^{i,j,k}:=\alpha(x_1^i,x_2^j,x_3^k,\widehat t_n)/\rho(\widehat t_n)$, let $U_{0,n}^{i,j,k}$ be the approximation of $U_0(x_1^i,x_2^j,x_3^k,\tau_n)$, and define the difference operators
\[\begin{cases}
\delta_{x_1}^2U_{0,n}^{i,j,k}:= U_{0,n}^{i+1,j,k}-2\,U_{0,n}^{i,j,k}+U_{0,n}^{i-1,j,k} & (i=1,\ldots,N_1),\\
\delta_{x_2}^2U_{0,n}^{i,j,k}:= U_{0,n}^{i,j+1,k}-2\,U_{0,n}^{i,j,k}+U_{0,n}^{i,j-1,k} & (j=1,\ldots,N_2),\\
\delta_{x_3}^2U_{0,n}^{i,j,k}:= U_{0,n}^{i,j,k+1}-2\,U_{0,n}^{i,j,k}+U_{0,n}^{i,j,k-1} & (k=1,\ldots,N_3),
\end{cases}\]
where we understand $U_{0,n}^{0,j,k}=U_{0,n}^{N_1,j,k}$, $U_{0,n}^{N_1+1,j,k}=U_{0,n}^{1,j,k}$, etc. due to the periodicity. Similar notations are parallelly shared by the counterparts of $U_0$ and $\alpha$ for $d=1,2$ as well as $U_1$ for $d=2,3$.

Now we are well-prepared to explain the implementation of numerical approaches starting from $d=1$. Applying a three-leveled finite difference scheme to \eqref{eq_gov-U0} with the periodic boundary condition, we obtain
\begin{equation}\label{eq_numer-1D}
\begin{cases}
\!\begin{aligned}
& U_{0,n+1}^i-2\,U_{0,n}^i+U_{0,n-1}^i=r\,\delta_{x_1}^2\left(\eta\,U_{0,n+1}^i-(1-2\eta)\,U_{0,n}^i+\eta\,U_{0,n-1}^i\right)\\
& \qquad\qquad\qquad\qquad\qquad\quad\;\;\;+2\,\Delta\tau^2\,F_n^i\quad(1\le i\le N_1,1\le n\le N_\tau-1),\\
& U_{0,0}^i=U_{0,1}^i=0\quad(1\le i\le N_1),\\
& U_{0,n}^0=U_{0,n}^{N_1},U_{0,n}^{N_1+1}=U_{0,n}^1\quad(1\le n\le N_\tau-1),
\end{aligned}
\end{cases}
\end{equation}
where $\eta\in[0,1/2]$ is a parameter. \eqref{eq_numer-1D} becomes the von Neumann scheme when $\eta=1/4$, and it is unconditionally stable as long as $\eta\ge1/4$. The numerical result with $\eta=1/4$, $T=1$, $L_1=\pi$ and the given
\begin{equation}\label{eq_data-1D}
\rho(t)=\frac1{2\sqrt{t+1}},\quad\alpha(x,t)=\exp\left(-\frac{(x-\pi/2)^2}2\right)(1-\cos(10\,x))\exp\left(1-\frac t{10}\right)
\end{equation}
is illustrated in Figure~\ref{fig_numer-1D}.
\begin{figure}[htbp]\centering
\includegraphics[width=.6\textwidth]{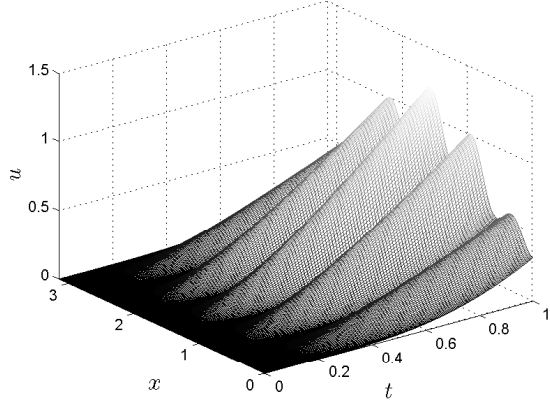}\\
\caption{\footnotesize{Numerical result of the one-dimensional forward problem with $\alpha$ and $\rho$ given by \eqref{eq_data-1D}.}}\label{fig_numer-1D}
\end{figure}

Now we consider the two-dimensional case. In Remark~\ref{rem_Duhamel} we mentioned the different situations between even and odd spatial dimensions, and such difference results in practical difficulties in the treatment for $d=2$. After a polar coordinate transform, the source term in the governing equation \eqref{eq_gov-U0} reads
\begin{align*}
(d-1)\,U_1(\bm x,\tau) & =[1,S_2,\triangle^0](x_1,x_2,\tau)\\
& =\int_0^\tau\!\!\int_0^{2\pi}F(x_1+(\tau-\zeta)\cos\theta,x_2+(\tau-\zeta)\sin\theta,\zeta)\,\mathrm d\theta\mathrm d\zeta,
\end{align*}
that is, the integral on the lateral of the cone $\Omega_1(\bm x,\tau)$. At the moment it is necessary to discretize the above integral at all grid points by, for example, composite trapezoid quadratures and linear interpolation techniques to provide reasonable approximations. Here we omit the details and just suppose that each $U_{1,n}^{i,j}\approx U_1(x_1^i,x_2^j,\tau_n)$ is obtained. To avoid massive matrix manipulations while preserve the unconditional stability as the von Neumann scheme in one-dimensional case, we deal with system \eqref{eq_gov-U0} by the alternating direction implicit (ADI) method (see Lees~\cite{L-1962})
\[\begin{cases}
\!\begin{aligned}
& U_{0,n+1/2}^{i,j}-2\,U_{0,n}^{i,j}+U_{0,n-1}^{i,j}=r\,\delta_x^2\left(\eta\,U_{0,n+1/2}^{i,j}+(1-2\eta)\,U_{0,n}^{i,j}+\eta\,U_{0,n-1}^{i,j}\right)\\
& \qquad\qquad\qquad\qquad\qquad\qquad\:\:\:+r\,\de_y^2U_{0,n}^{i,j}+\Delta\tau^2\,U_{1,n}^{i,j},\\
& U_{0,n+1}^{i,j}-U_{0,n+1/2}^{i,j}=r\eta\,\delta_y^2\left(U_{0,n+1}^{i,j}-2\,U_{0,n}^{i,j}+U_{0,n-1}^{i,j}\right)
\end{aligned}
\end{cases}\]
with $\eta\in[0,1/2]\ (1\le i\le N_1,1\le j\le N_2,1\le n\le N_\tau-1)$, which inherits the unconditional stability property when $\eta\ge1/4$. Here the initial and boundary treatments are parallel to that of \eqref{eq_numer-1D}, and the notation $U_{0,n+1/2}^{i,j}$ only stands for an intermediate procedure in pursue of $U_{0,n+1}^{i,j}$ instead of any estimation at $\tau=(n+1/2)\,\Delta\tau$. We implement the above scheme with $\eta=1/4$, $T=50$, $L_1=L_2=1$ and the given
\begin{equation}\label{eq_data-2D}
\rho(t)=\frac1{50\sqrt{t+1}},\quad\alpha(\bm x,t)=f(\bm x)\left(1-\exp\left(-\frac t{10}\right)\right).
\end{equation}
Here the spatial component $f$ of $\alpha$ describes a hexagon-shaped structure satisfying the periodicity with the addition of a random noise subjected to the Cauchy distribution, producing few outstanding pixels with a low-amplitude background (see Kaipio \& Somersalo~\cite[\S3.3.2]{KS-2005}). In Figure~\ref{fig_numer-2D}, we capture several cuts at different stages of the phase transformation.
\begin{figure}[htbp]\centering
\includegraphics[width=\textwidth]{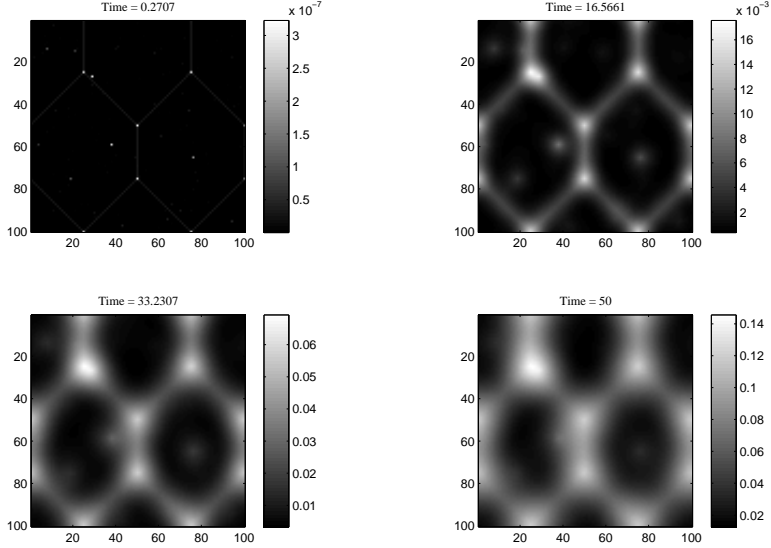}\\
\caption{\footnotesize{Numerical simulation of the two-dimensional forward problem with $\alpha$ and $\rho$ given by \eqref{eq_data-2D}.}}\label{fig_numer-2D}
\end{figure}

Finally, for $d=3$ it suffices to treat \eqref{eq_gov-Um} with $m=1$ and $m=0$ sequentially so that we first solve for $U_1$ by the data $4\pi\,F$ and then obtain $U_0$ by the source term $2\,U_1$. As for the numerical scheme, we apply the three-dimensional version of the ADI method (see Fairweather \& Metchell~\cite{FM-1965}) to both $U_1$ and $U_0$ as
\begin{align*}
& \begin{cases}
\!\begin{aligned}
& U_{1,n+1/3}^{i,j,k}-2\,U_{1,n}^{i,j,k}+U_{1,n-1}^{i,j,k}=r\,\delta_x^2\left(\eta\,U_{1,n+1/3}^{i,j,k}+(1-2\eta)\,U_{1,n}^{i,j,k}+\eta\,U_{1,n-1}^{i,j,k}\right)\\
& \qquad\qquad\qquad\qquad\qquad\qquad\quad\,+r\,(\delta_y^2+\delta_z^2)U_{1,n}^{i,j,k}+4\pi\,\Delta\tau^2\,F_n^{i,j,k},\\
& U_{1,n+2/3}^{i,j,k}-U_{1,n+1/3}^{i,j,k}=r\eta\,\delta_y^2\left(U_{1,n+2/3}^{i,j,k}-2\,U_{1,n}^{i,j,k}+U_{1,n-1}^{i,j,k}\right),\\
& U_{1,n+1}^{i,j,k}-U_{1,n+2/3}^{i,j,k}=r\eta\,\delta_z^2\left(U_{1,n+1}^{i,j,k}-2\,U_{1,n}^{i,j,k}+U_{1,n-1}^{i,j,k}\right),
\end{aligned}
\end{cases}\\
& \begin{cases}
\!\begin{aligned}
& U_{0,n+1/3}^{i,j,k}-2\,U_{0,n}^{i,j,k}+U_{0,n-1}^{i,j,k}=r\,\delta_x^2\left(\eta\,U_{0,n+1/3}^{i,j,k}+(1-2\eta)\,U_{0,n}^{i,j,k}+\eta\,U_{0,n-1}^{i,j,k}\right)\\
& \qquad\qquad\qquad\qquad\qquad\qquad\quad\,+r\,(\delta_y^2+\delta_z^2)U_{0,n}^{i,j,k}+2\,\Delta\tau^2\,U_{1,n}^{i,j,k},\\
& U_{0,n+2/3}^{i,j,k}-U_{0,n+1/3}^{i,j,k}=r\eta\,\delta_y^2\left(U_{0,n+2/3}^{i,j,k}-2\,U_{0,n}^{i,j,k}+U_{0,n-1}^{i,j,k}\right),\\
& U_{0,n+1}^{i,j,k}-U_{0,n+2/3}^{i,j,k}=r\eta\,\delta_z^2\left(U_{0,n+1}^{i,j,k}-2\,U_{0,n}^{i,j,k}+U_{0,n-1}^{i,j,k}\right)
\end{aligned}
\end{cases}
\end{align*}
with $\eta\in[0,1/2]\ (1\le i\le N_1,1\le j\le N_2,1\le k\le N_3,1\le n\le N_\tau-1)$. Still the stability properties of such an approach are identical to that of lower dimensions.

\Section{Conclusion and Future Works}\label{sec_cclu}

In summary, it reveals that Cahn's time cone model \eqref{eq_def-u}--\eqref{eq_def-timecone} concerning phase transformation kinetics can be equivalently described by a class of multiple hyperbolic systems with the homogeneous initial condition, in which the growth speed $\rho(t)$ mainly plays the role of the propagation speed of wave. Especially, such systems take the simplest forms in odd spatial dimensions, where the nucleation rate $\alpha(\bm x,t)$ accounts for the source term (see Theorem~\ref{thm_gov-u}). Moreover, by the change of variable \eqref{eq_change-var} which only involves $\rho(t)$, the governing equation \eqref{eq_gov-u-odd} is further reduced to a multiple d'Alembertian system with unit propagation speed (see Proposition~\ref{prop_gov-Um}). To a certain extent, the derivation of hyperbolic-type governing equations provides an appropriate formulation which enables systematic investigations of problems related to structure transformations in both theoretical and numerical senses. As a tentative application, it was demonstrated in the previous section that efficient forward solvers are readily implemented on basis of this alternative framework instead of the original model.

More significantly, the transform from an integral equation to partial differential equations also initiates smooth discussions on the corresponding inverse problems by using classical results of inverse hyperbolic problems. In a forthcoming paper, we shall study the problem of identifying the nucleation rate $\alpha(\bm x,t)$ by several kinds of observation data of the generation events $u(\bm x,t)$, for instance, by final measurements and partial interior measurements. It turns out that the reasoning can be easily carried out from a viewpoint of inverse source problems of the hyperbolic type. On the other hand, the reconstruction of the growth speed $\rho(t)$ may be possible by regarding it as either the wave speed or the function determining a change of variable. More challenging topics may involve the simultaneous identification of both $\alpha$ and $\rho$ by more informative observations, as well as the computational methods for the above mentioned inverse problems.

\appendix
\Section{Technical Details}\label{sec_tech}

Here we provide detailed proofs of the technical lemmata in Section~\ref{sec_proof}.\smallskip

\noindent{\it Proof of Lemma~$\ref{lem_brkt-diff}$.} For the boundary integral $[k,S_d,\triangle^j]$, we introduce the polar transform
\begin{align*}
& \bm y=\bm x+(\tau-\zeta)\,\bm p(\bm\varphi)\quad(\bm\varphi=(\varphi_1,\ldots,\varphi_{d-2},\varphi_{d-1})\in D_d:=[0,\pi]^{d-2}\times[0,2\pi]),\nonumber\\
& \bm p(\bm\varphi):=(\cos\varphi_1,\sin\varphi_1\cos\varphi_2,\ldots,\sin\varphi_1\cdots\sin\varphi_{d-2}\cos\varphi_{d-1},\sin\varphi_1\cdots\sin\varphi_{d-2}\sin\varphi_{d-1}).\label{eq_polar-angle}
\end{align*}
Then the Jacobian reads $(\tau-\zeta)^{d-1}\,q(\bm\varphi)$, where $q(\bm\varphi):=\sin^{d-2}\varphi_1\cdots\sin\varphi_{d-2}$. Therefore, we can write expression \eqref{eq_def-brkt-bdry} equivalently as
\[[k,S_d,\triangle^j](\bm x,\tau)=\int_0^\tau\!\!\int_{D_d}(\tau-\zeta)^{d-k-1}\,q(\bm\varphi)\,\triangle^jF(\bm x+(\tau-\zeta)\,\bm p(\bm\varphi),\zeta)\,\mathrm d\bm\varphi\mathrm d\zeta.\]
Parallelly, for the interior integral $[k,B_d,\triangle^j]$, we apply a similar polar transform
\[\bm y=\bm x+\ell\,\bm p(\bm\varphi)\quad(0<\ell<\tau-\zeta,\ \bm\varphi=(\varphi_1,\ldots,\varphi_{d-2},\varphi_{d-1})\in D_d)\]
with the Jacobian $\ell^{d-1}\,q(\bm\varphi)$, where $\bm p(\bm\varphi)$, $D_d$ and $q(\bm\varphi)$ are defined as before. Then \eqref{eq_def-brkt-inter} can be rewritten as
\[[k,B_d,\triangle^j](\bm x,\tau)=\int_0^\tau\!\!\int_0^{\tau-\zeta}\!\!\!\int_{D_d}\frac{\ell^{d-1}\,q(\bm\varphi)\,\triangle^jF(\bm x+\ell\,\bm p(\bm\varphi),\zeta)}{(\tau-\zeta)^k}\,\mathrm d\bm\varphi\mathrm d\ell\mathrm d\zeta.\]

With these alternative representations, it is straightforward to verify \eqref{eq_brkt-laplace} that
\begin{align*}
\triangle[k,S_d,\triangle^j](\bm x,\tau) & =\int_0^\tau\!\!\int_{D_d}(\tau-\zeta)^{d-k-1}\,q(\bm\varphi)\,\triangle^{j+1}F(\bm x+(\tau-\zeta)\,\bm p(\bm\varphi),\zeta)\,\mathrm d\bm\varphi\mathrm d\zeta\\
& =[k,S_d,\triangle^{j+1}](\bm x,\tau),\\
\triangle[k,B_d,\triangle^j](\bm x,\tau) & =\int_0^\tau\!\!\int_0^{\tau-\zeta}\!\!\!\int_{D_d}\frac{\ell^{d-1}\,q(\bm\varphi)\,\triangle^{j+1}F(\bm x+\ell\,\bm p(\bm\varphi),\zeta)}{(\tau-\zeta)^k}\,\mathrm d\bm\varphi\mathrm d\ell\mathrm d\zeta\\
& =[k,B_d,\triangle^{j+1}](\bm x,\tau).
\end{align*}
For $[k,S_d,\triangle^j]$ with $k<d-1$, we apply Green's formula and notice the fact that $\bm p(\bm\varphi)$ coincides with the unit outward normal vector $\bm\nu(\bm y)$ at $\bm y=\bm x+(\tau-\zeta)\,\bm p(\bm\varphi)$ to proceed
\begin{align*}
& \quad\,\,\partial_\tau[k,S_d,\triangle^j](\bm x,\tau)\\
& =\int_0^\tau\partial_\tau\left(\int_{D_d}(\tau-\zeta)^{d-k-1}\,q(\bm\varphi)\,\triangle^jF(\bm x+(\tau-\zeta)\,\bm p(\bm\varphi),\zeta)\,\mathrm d\bm\varphi\right)\mathrm d\zeta\\
& =(d-k-1)\int_0^\tau\!\!\int_{D_d}(\tau-\zeta)^{d-k-2}\,q(\bm\varphi)\,\triangle^jF(\bm x+(\tau-\zeta)\,\bm p(\bm\varphi),\zeta)\,\mathrm d\bm\varphi\mathrm d\zeta\\
& \quad\,+\int_0^\tau\!\!\int_{D_d}(\tau-\zeta)^{d-k-1}\,q(\bm\varphi)\,\nabla\triangle^jF(\bm x+(\tau-\zeta)\,\bm p(\bm\varphi),\zeta)\cdot\bm p(\bm\varphi)\,\mathrm d\bm\varphi\mathrm d\zeta\\
& =(d-k-1)\,[k+1,S_d,\triangle^j](\bm x,\tau)+\int_0^\tau\!\!\int_{S_d(\bm x,\tau-\zeta)}\frac{\nabla\triangle^jF(\bm y,\zeta)\cdot\bm\nu(\bm y)}{(\tau-\zeta)^k}\,\mathrm d\bm\sigma\mathrm d\zeta\\
& =(d-k-1)\,[k+1,S_d,\triangle^j](\bm x,\tau)+\int_0^\tau\!\!\int_{B_d(\bm x,\tau-\zeta)}\frac{\triangle^{j+1}F(\bm y,\zeta)}{(\tau-\zeta)^k}\,\mathrm d\bm y\mathrm d\zeta\\
& =(d-k-1)\,[k+1,S_d,\triangle^j](\bm x,\tau)+[k,B_d,\triangle^{j+1}](\bm x,\tau).
\end{align*}
For $k=d-1$, a similar argument yields immediately
\begin{align*}
\partial_\tau[d-1,S_d,\triangle^j](\bm x,\tau) & =\int_{D_d}q(\bm\varphi)\,\triangle^jF(\bm x,\tau)\,\mathrm d\bm\varphi\\
& \quad\,+\int_0^\tau\!\!\int_{D_d}q(\bm\varphi)\,\nabla\triangle^jF(\bm x+(\tau-\zeta)\,\bm p(\bm\varphi),\zeta)\cdot\bm p(\bm\varphi)\,\mathrm d\bm\varphi\mathrm d\zeta\\
& =\sigma_d\,\triangle^jF(\bm x,\tau)+\int_0^\tau\!\!\int_{S_d(\bm x,\tau-\zeta)}\frac{\nabla\triangle^jF(\bm y,\zeta)\cdot\bm\nu(\bm y)}{(\tau-\zeta)^{d-1}}\,\mathrm d\bm\sigma\mathrm d\zeta\\
& =\sigma_d\,\triangle^jF(\bm x,\tau)+\int_0^\tau\!\!\int_{B_d(\bm x,\tau-\zeta)}\frac{\triangle^{j+1}F(\bm y,\zeta)}{(\tau-\zeta)^{d-1}}\,\mathrm d\bm y\mathrm d\zeta\\
& =\sigma_d\,\triangle^jF(\bm x,\tau)+[d-1,B_d,\triangle^{j+1}](\bm x,\tau),
\end{align*}
which is indeed \eqref{eq_brkt-timediff-bdry}. For $[k,B_d,\triangle^j]$ with $k\le d-1$, we employ a parallel calculation to derive \eqref{eq_brkt-timediff-inter} as
\begin{align*}
\partial_\tau[k,B_d,\triangle^j](\bm x,\tau) & =\int_0^\tau\partial_\tau\left(\int_0^{\tau-\zeta}\!\!\!\int_{D_d}\frac{\ell^{d-1}\!q(\bm\varphi)\,\triangle^jF(\bm x+\ell\,\bm p(\bm\varphi),\zeta)}{(\tau-\zeta)^k}\,\mathrm d\bm\varphi\mathrm d\ell\right)\mathrm d\zeta\\
& =-k\int_0^\tau\!\!\int_0^{\tau-\zeta}\!\!\!\int_{D_d}\frac{\triangle^jF(\bm x+\ell\,\bm p(\bm\varphi),\zeta)}{(\tau-\zeta)^{k+1}}\,\mathrm d\bm\varphi\mathrm d\ell\mathrm d\zeta\\
& \quad\,+\int_0^\tau\!\!\int_{D_d}(\tau-\zeta)^{d-k-1}\,q(\bm\varphi)\,\triangle^jF(\bm x+(\tau-\zeta)\,\bm p(\bm\varphi),\zeta)\,\mathrm d\bm\varphi\mathrm d\zeta\\
& =-k\,[k+1,B_d,\triangle^j](\bm x,\tau)+[k,S_d,\triangle^j](\bm x,\tau).
\end{align*}
The proof is completed.\hfill$\square$\smallskip

\noindent{\it Proof of Lemma~$\ref{lem_tech}$.} We proceed for both assertions by induction on $m$.

(1)\ \ For $m=2$, \eqref{eq_exp-Pmk} reads $P_2^1(d)=d-2$ and $P_2^2(d)=1$, indicating \eqref{eq_rel-Pmk} immediately by taking $m=k=2$. Supposing \eqref{eq_rel-Pmk} holds for some $m\ge2$, we shall show that it still holds for $m+1$, namely
\[P_{m+1}^{k-1}(d)=((d-m-1)+(-1)^k(m+1-2\lfloor k/2\rfloor))\,P_{m+1}^k(d)\quad(2\le k\le m+1,\ m\ge2).\]
The case $k=m+1$ is trivial, otherwise we replace $m$ by $m+1$ in \eqref{eq_exp-Pmk} and apply the inductive assumption \eqref{eq_rel-Pmk} for $m$ to find
\begin{align*}
P_{m+1}^{k-1}(d) & =(d-2(m+1-\lfloor k/2\rfloor))\,P_m^{k-1}(d)\\
& =(d-2(m+1-\lfloor k/2\rfloor))\,(d-m+(-1)^k(m-2\lfloor k/2\rfloor))\,P_m^k(d),\\
P_{m+1}^k(d) & =(d-2(m+1-\lfloor(k+1)/2\rfloor))\,P_m^k(d).
\end{align*}
As a result, it suffices to show
\begin{align*}
& \quad\,\,(d-2(m+1-\lfloor k/2\rfloor))\,(d-m+(-1)^k(m-2\lfloor k/2\rfloor))\\
& =(d-m-1+(-1)^k(m+1-2\lfloor k/2\rfloor))\,(d-2(m+1-\lfloor(k+1)/2\rfloor)),
\end{align*}
which can be easily verified by discussing the parity of $k$.

(2)\ \ For $m=3$, \eqref{eq_exp-cmk} implies $c_3^2=1$ and \eqref{eq_rel-cmk} follows immediately by taking $m=3$ and $k=2$. Supposing \eqref{eq_rel-cmk} is valid for some $m\ge3$, we shall show that it still holds for $m+1$, namely
\[2(m-k+1)\,c_{m+1}^k=\begin{cases}
k\,c_{m+1}^{k+1} & (k\mbox{ even}),\\
(2m-k+1)\,c_{m+1}^{k+1} & (k\mbox{ odd})
\end{cases}\quad(2\le k\le m,\ m\ge4).\]
For odd $k$, \eqref{eq_exp-cmk} yields $c_{m+1}^k=c_m^{k-1}+c_m^k$ and $c_{m+1}^{k+1}=c_m^k$, while the inductive assumption \eqref{eq_rel-cmk} implies $2(m-k+1)\,c_m^{k-1}=(k-1)\,c_m^k$ since $k-1$ is even. Therefore
\[2(m-k+1)\,c_{m+1}^k=2(m-k+1)\,(c_m^{k-1}+c_m^k)=(2m-k+1)\,c_m^k=(2m-k+1)\,c_{m+1}^{k+1}.\]
Parallelly, we obtain for even $k$ that
\[2(m-k+1)\,c_{m+1}^k=2(m-k+1)\,c_m^{k-1}=(2m-k)\,c_m^k=k\,(c_m^k+c_m^{k+1})=k\,c_{m+1}^{k+1}\]
since now $k-1$ is odd. This ends the proof.\hfill$\square$

\bigskip

{\bf Acknowledgement}\ \ The authors appreciate the invaluable discussions with Professors Jin Cheng, Wenbin Chen, Shuai Lu and Mr. Lingdi Wang during their visits to School of Mathematical Sciences, Fudan University. The authors are also grateful to Professor Vincenzo Capasso for the useful comments.

\end{document}